\DeclareMathOperator{\dist}{dist}
\DeclareMathOperator{\diam}{diam}
\newcommand{\Bell}{\boldsymbol{B}}
\newcommand{\Class}{\boldsymbol{A}}
\newcommand{\ClassC}{\boldsymbol{A}^{\circ}}
\newcommand{\ClassCs}{\ClassC_{\mathrm{s}}}
\newcommand{\BMO}{\mathrm{BMO}}
\newcommand{\eps}{\varepsilon}
\newcommand{\vf}{\varphi}
\newcommand{\Omm}{\mathfrak{S}} %Абстрактная область для определения мартингала
\newcommand{\Dell}{\mathfrak{D}} %Абстрактное множество, где мартингал останавливается
\DeclareMathOperator{\cl}{cl}
\DeclareMathOperator{\E}{\mathbb{E}}
\newcommand{\av}[2]{\langle {#1}\rangle_{{}_{#2}}}
\renewcommand{\le}{\leqslant}
\renewcommand{\leq}{\leqslant}
\renewcommand{\geq}{\geqslant}
\newcommand{\per}{\hbox{\tiny \textup{per}}}
\newcommand{\Om}{\mathfrak{W}}
\newcommand{\Ommuc}{\mathfrak{W}_{\mathrm{muc}}} %%Абстрактная область для определения мартингала
\newcommand{\M}{\EuScript{M}}
\newcommand{\Mpr}{\M_{\mathrm{pr}}}
\newtheorem{Le}{Lemma}[section]
\newtheorem{Def}[Le]{Definition}
\newtheorem{Th}[Le]{Theorem}
\newtheorem{Cor}[Le]{Corollary}
\newtheorem{Rem}[Le]{Remark}
\numberwithin{equation}{section}
\begin{document}
\author{Dmitriy~Stolyarov %\and Vasily Vasyunin 
\and Pavel~Zatitskiy}
\title{Sharp transference principle for~$\BMO$ and~$A_p$
\thanks{Supported by the Russian Science Foundation grant 19-71-10023.}}
\maketitle
\begin{abstract}
We provide a version of the transference principle. It says that certain optimization problems for functions on the circle, the interval, and the line have the same answers. In particular, we show that the sharp constants in the John--Nirenberg inequalities for naturally defined $\BMO$-spaces on the circle, the interval, and the line coincide. The same principle holds true for the Reverse H\"older inequality for Muckenhoupt weights. %We also show that the same way formulated transference principle does not hold for optimization problems with more complicated geometry.  
\end{abstract}

\section{Preliminaries and statement of the results}
\subsection{The~$\BMO$ space and the John--Nirenberg inequality}
The classical definition of the~$\BMO(\mathbb{R})$ seminorm reads as follows:
\begin{equation}\label{BMOClassicalDefintion}
\|\vf\| = \sup_{J\ \hbox{\tiny interval}}\frac{1}{|J|}\int\limits_{J} \big|\vf(x) - \av{\vf}{J}\big|\,dx, \quad \vf \in L_{1,\mathrm{loc}}(\mathbb{R}).
\end{equation}
Here~$\av{\vf}{J} = |J|^{-1}\int_J \vf(x)\,dx$ is the average of~$\vf$ over~$J$. The space of functions of bounded mean oscillation defined by this seminorm has many important properties. One of them is the quantitative bound called the John--Nirenberg inequality. It says that there exist constants~$C_1^{\mathbb{R}}$ and~$C_2^{\mathbb{R}}$ such that for any interval~$J \subset \mathbb{R}$, any~$\lambda > 0$, and any~$\vf \in \BMO(\mathbb{R})$,
\begin{equation*}
\frac{1}{|J|}\Big|\{x\in J\mid |\vf(x) - \av{\vf}{J}| \geq \lambda\}\Big| \leq C_1^{\mathbb{R}}e^{-\frac{C_2^{\mathbb{R}}\lambda}{\|\vf\|}}.
\end{equation*}
This inequality, in particular, implies that one can equip~$\BMO$ with an equivalent seminorm
\begin{equation}\label{BMOp}
\|\vf\|_{p,\mathbb{R}} = \sup_{J\ \hbox{\tiny interval}}\Big(\frac{1}{|J|}\int\limits_{J} \big|\vf(x) - \av{\vf}{J}\big|^p\,dx\Big)^{\frac{1}{p}}, \quad p \in [1,\infty).
\end{equation}
Moreover, there exists a constant~$C_3^{\mathbb{R}}$ such that
\begin{equation}\label{ShortIntJN}
%\sup\limits_{J\ \hbox{\tiny interval}}\frac{1}{|J|}
\int\limits_{J}e^{\frac{C_3^{\mathbb{R}} \vf}{\|\vf\|_{1,\mathbb{R}}}} < \infty, \quad \hbox{for any }\vf \in \BMO(\mathbb{R}) \hbox{ and any interval } J.
\end{equation}
The latter inequality is called the integral form of the John--Nirenberg inequality. 
\begin{Rem}
A more classical way to state the integral form of the John--Nirenberg inequality is\textup{:} for any~$\vf \in\BMO(\mathbb{R})$
\begin{equation*}
\sup\limits_J \Big\langle\exp\Big(C_3^{\mathbb{R}}\frac{\vf - \av{\vf}{J}}{\|\vf\|_{1,\mathbb{R}}}\Big)\Big\rangle_J < \infty,
\end{equation*}
where~$J$ runs through all finite subintervals of~$\mathbb{R}$. This form is equivalent to~\eqref{ShortIntJN} with the same constant~$C_3^\mathbb{R}$. For example, this equivalence can be derived from Theorem~\textup{\ref{CoincedenceBMO}} below.
\end{Rem}

One may also define the space~$\BMO(I)$ consisting of integrable functions on the interval~$I$ in a similar manner. In such a case, the intervals~$J$ over which we compute the mean oscillation in formulas~\eqref{BMOClassicalDefintion} and~\eqref{BMOp}, lie inside~$I$. We note that the restriction~$\vf|_{I}$ lies in~$\BMO(I)$ for any~$\vf \in \BMO(\mathbb{R})$:
\begin{equation}\label{SimpleEmbedding}
\|\vf|_{I}\|_{1,I}\leq \|\vf\|_{1,\mathbb{R}}.
\end{equation}
However, there exist functions~$\psi \in \BMO(I)$ which cannot be extended to a function in~$\BMO(\mathbb{R})$ having the same norm (see~\cite{Shanin}).

One may wonder what are the best possible values of the constants~$C_1^{\mathbb{R}},C_2^{\mathbb{R}},C_3^{\mathbb{R}}$ available in the inequalities above. For the case of~$\BMO(I)$, %the sharp values are known. Namely, 
the sharp constants~$C_1^I$,~$C_2^I$ and~$C_3^I$ in the inequalities
\begin{align}
\frac{1}{|I|}\Big|\{x\in I\mid |\vf(x) - \av{\vf}{I}| \geq \lambda\}\Big| \leq C_1^Ie^{-\frac{C_2^I\lambda}{\|\vf\|_{1,I}}};\\
\label{JNIntegralClassic}\int\limits_{I}e^{\frac{C_3^I \vf}{\|\vf\|_{1,I}}} < \infty, \quad \vf \in \BMO(I),
\end{align}
are known (note that these constants do not depend on the particular choice of~$I$). The best possible constant~$C_2^I$ equals~$\frac{2}{e}$ (see~\cite{Korenovskiy}),  and the best possible constant~$C_1^I$ for the case~$C_2^I = \frac{2}{e}$ equals~$\frac12 e^{\frac{4}{e}}$ (see~\cite{Lerner}). As for the constant~$C_3^I$, the optimal value does not exist, however, the supremum of all admissible constants in~\eqref{JNIntegralClassic} equals~$C_2^I$, as one can see from representing the integral in~\eqref{JNIntegralClassic} in terms of the distribution function of~$\vf$. We formulate this principle as a remark.
\begin{Rem}
The best possible constants~$C_{2,p}^I$ and~$C_{2,p}^{\mathbb{R}}$ in the inequalities
\begin{align*}
\frac{1}{|I|}\Big|\{x\in I\mid |\vf(x) - \av{\vf}{I}| \geq \lambda\}\Big| \leq C_{1,p}^Ie^{-\frac{C_{2,p}^I\lambda}{\|\vf\|_{p,I}}};\\
\frac{1}{|I|}\Big|\{x\in I\mid |\vf(x) - \av{\vf}{I}| \geq \lambda\}\Big| \leq C_{1,p}^{\mathbb{R}}e^{-\frac{C_{2,p}^{\mathbb{R}}\lambda}{\|\vf\|_{p,\mathbb{R}}}}
\end{align*} 
are equal to the constants~$C_{3,p}^I$ and~$C_{3,p}^{\mathbb{R}}$ respectively\textup, where the latter pair of constants is defined by the formulas
\begin{equation}\label{C3pI}
\begin{aligned}
C_{3,p}^{I} = \sup\Big\{C\;\Big|\,\forall \vf \in\BMO(I) \quad \int\limits_{I}e^{\frac{C\vf}{\|\vf\|_{p,I}}} < \infty\Big\};\\
C_{3,p}^{\mathbb{R}} = \sup\Big\{C\;\Big|\,\forall \vf \in\BMO(\mathbb{R})\ \forall J \quad \int\limits_J e^{\frac{C\vf}{\|\vf\|_{p,\mathbb{R}}}} < \infty\Big\}.
\end{aligned}
\end{equation}
\end{Rem}

One may also wonder what are the best possible constants~$C_{3,p}^\mathbb{R}$ and~$C_{3,p}^{I}$. The constant~$C_{3,p}^{I}$ was found in~\cite{Slavin} and~\cite{SlavinVasyunin}:
\begin{equation*}
C_{3,p}^{I}=
\Big(\frac{p}{e}\Big(\Gamma(p) - \int\limits_0^1 t^{p-1}e^t\Big)+1\Big)^{\frac{1}{p}}.
\end{equation*}
\begin{Th}\label{CoincedenceBMO}
For any~$p \in [1,\infty),$~$C_{3,p}^\mathbb{R} = C_{3,p}^{I}$. In particular\textup,~$C_2^{\mathbb{R}} = C_2^{I}$.
\end{Th}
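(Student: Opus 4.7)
The plan is to establish the two inequalities $C_{3,p}^I \le C_{3,p}^\mathbb{R}$ and $C_{3,p}^\mathbb{R} \le C_{3,p}^I$ separately. The first is routine and follows by restriction; the second carries the transference content of the theorem.

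For $C_{3,p}^I \le C_{3,p}^\mathbb{R}$, fix $C < C_{3,p}^I$, $\vf \in \BMO(\mathbb{R})$, and an interval $J \subset \mathbb{R}$. The restriction $\psi := \vf|_J$ lies in $\BMO(J)$ with $\|\psi\|_{p,J} \le \|\vf\|_{p,\mathbb{R}}$, the generalization of~\eqref{SimpleEmbedding} to arbitrary $p$ being immediate since the supremum in~\eqref{BMOp} is over a smaller family of intervals. Split $J$ by the sign of $\vf$: on $\{\vf < 0\}$ the integrand $e^{C\vf/\|\vf\|_{p,\mathbb{R}}}$ is at most $1$, contributing at most $|J|$; on $\{\vf \ge 0\}$, using $1/\|\vf\|_{p,\mathbb{R}} \le 1/\|\psi\|_{p,J}$, the integrand is dominated by $e^{C\psi/\|\psi\|_{p,J}}$, integrable on $J$ because $C_{3,p}^J = C_{3,p}^I$ is interval-independent and $C < C_{3,p}^I$. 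Hence $\int_J e^{C\vf/\|\vf\|_{p,\mathbb{R}}} < \infty$, so $C \le C_{3,p}^\mathbb{R}$, and letting $C \uparrow C_{3,p}^I$ gives the inequality.

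For the substantive direction $C_{3,p}^\mathbb{R} \le C_{3,p}^I$, the idea is to turn a ``bad'' function in $\BMO(I)$ into a function in $\BMO(\mathbb{R})$ with nearly the same seminorm. Given $C > C_{3,p}^I$, pick $\psi \in \BMO(I)$ with $\|\psi\|_{p,I}=1$, $\av{\psi}{I}=0$, and $\int_I e^{C\psi} = \infty$, and define $\vf_L(x) = \psi(x/L)$ on $[0,L]$ and $\vf_L \equiv 0$ on $\mathbb{R}\setminus[0,L]$. By scale invariance the $p$-oscillation of $\vf_L$ over sub-intervals of $[0,L]$ is at most $1$, and it is $0$ over intervals disjoint from $[0,L]$. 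For an interval $J'$ properly containing $[0,L]$, the mean of $\vf_L$ on $J'$ is zero and the oscillation is bounded by $(L/|J'|)^{1/p} \le 1$. If one can further show that the straddling intervals contribute at most $1 + o(1)$ as $L \to \infty$, so that $\|\vf_L\|_{p,\mathbb{R}} \to 1$, then taking $J = [0,L]$ yields $\int_J e^{C\vf_L} = L\int_I e^{C\psi} = \infty$; rewriting this as $\int_J \exp\bigl((C\|\vf_L\|_{p,\mathbb{R}}) \vf_L/\|\vf_L\|_{p,\mathbb{R}}\bigr) = \infty$ gives $C_{3,p}^\mathbb{R} \le C\|\vf_L\|_{p,\mathbb{R}}$, and the inequality follows by letting $L \to \infty$ and $C \downarrow C_{3,p}^I$.

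The main obstacle is controlling the $p$-oscillation of $\vf_L$ over intervals $[a,b]$ straddling an endpoint of $[0,L]$. A Minkowski-type split bounds this oscillation by $\frac{L-a}{b-a}\|\psi\|_{p,I}$ plus a correction involving the sub-average $|\av{\psi}{[a/L,1]}|$ and the geometric factor $\tfrac{(L-a)(b-L)}{(b-a)^2} \le \tfrac14$; the correction does not vanish in general, and Shanin's counterexample (mentioned after~\eqref{SimpleEmbedding}) rules out a norm-preserving extension of an arbitrary $\BMO(I)$-function. Overcoming this requires either refining the extension---for example, replacing the abrupt drop to zero by a slow transition over a buffer of length $\gg L$, so that the straddling correction dilutes as $L \to \infty$---or a diagonal argument approximating a general witness $\psi$ by a sequence whose extension constants tend to $1$ while still producing divergent exponential integrals for the chosen $C > C_{3,p}^I$. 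Pushing this step through is the heart of the proof.
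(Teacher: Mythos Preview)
Your first inequality $C_{3,p}^I \le C_{3,p}^\mathbb{R}$ is fine. The second direction, however, is not proved: you yourself locate the obstruction and then leave it standing. Both remedies you sketch fail.

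The buffer/rescaling idea cannot work because the $\BMO_p$ seminorm is scale-invariant. Dilating $\psi$ to $[0,L]$ and appending a buffer of length $K L$ is, after rescaling, the same problem as extending $\psi$ on $[0,1]$ with a buffer of length $K$; letting $L\to\infty$ changes nothing. And the paper states explicitly (just after the theorem) that the extremal witness $\psi=\log x$ satisfies $\|\Psi\|_{p,\mathbb{R}}\ge 1.001\,\|\log x\|_{p,[0,1]}$ for \emph{every} extension $\Psi$, for every $p$. So no extension-with-buffer of the relevant witness can have $\|\vf_L\|_{p,\mathbb{R}}\to 1$. The ``diagonal'' suggestion is too vague to assess, and faces the same wall: any $\psi$ witnessing divergence at $C$ just above $C_{3,p}^I$ is, in effect, a truncated logarithm and inherits the extension obstruction.

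The paper's proof takes a genuinely different route and avoids extension altogether. One works with the specific extremizer $\log x$, approximates it by step functions $\vf_{\lambda,N}$ (formula~\eqref{Approximation}), and then, instead of extending, constructs a \emph{periodic} function on $\mathbb{R}$ (i.e.\ a function on $\mathbb{T}$) that is \emph{equidistributed} with $\vf_{\lambda,N}$ and whose $\BMO_p(\mathbb{T})$ norm is within $\delta$ of $\|\log x\|_{p,[0,1]}$. Equidistribution preserves $\int e^{C\vf}$, which is all one needs. The construction has two ingredients: Lemma~\ref{ConstructionOfMartingale} packages $\mu_{\vf_{\lambda,N}}$ as the starting value of a simple $(\Om^{p,\eps},\Delta)$-martingale with $\eps=\|\log x\|_{p,[0,1]}+\delta$; then Theorem~\ref{Main} (whose engine is the $\lambda$-homogenization gluing of Lemma~\ref{Gluing}) turns any such martingale into a function on the circle with the prescribed distribution and lying in the right class. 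The missing idea in your proposal is precisely this: transfer the \emph{distribution}, not the function.
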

The most natural reason for Theorem~\ref{CoincedenceBMO} to hold would be the possibility to extend a function in~$\BMO_p(I)$ to a function in~$\BMO_p(\mathbb{R})$ with the same or almost the same norm. Unfortunately, this cannot be true. One may pick the logarithmic function~$\log x, x \geq 0$, and show that any of its extensions~$\psi$ to the entire line satisfies
\begin{equation*}
\|\psi\|_{1,\mathbb{R}} \geq 1.001 \|\log x\|_{1,[0,1]}.
\end{equation*}
For example, one can go through the proof of Proposition~$2.1$ in~\cite{Shanin} and recover this result. In fact, the reasoning works for the case of a general~$\BMO_p$ norm without any modifications.

%Now we switch to the case of~$\BMO_2$ norm, which appeared to be more tractable for computing the sharp constants in various inequalities. 

%\section{Other inequalities for $\BMO$ functions}
One may also wonder what is the sharp dependence between the norms~$\BMO_p$ and~$\BMO_q$ for different~$p$ and~$q$. The progress in this direction is mostly related to the case~$q=2$. Namely, the inequalities
\begin{align}
\label{EquivalentNormsP<2}\|\vf\|_{p,I}\leq \|\vf\|_{2,I}, \quad &1\leq p \leq 2;\\
\label{EquivalentNormsP>2}\|\vf\|_{2,I}\leq \|\vf\|_{p,I}\leq \Big(\frac{p}{2}\Gamma(p)\Big)^{\frac{1}{p}}\|\vf\|_{2,I}, \quad &p > 2
\end{align}
are sharp, as it is proved in~\cite{SlavinVasyuninLp}. 
\begin{Th}\label{LpNormsTheorem}
The same inequalities
\begin{align*}
\|\vf\|_{p,\mathbb{R}}\leq \|\vf\|_{2,\mathbb{R}}, \quad &1\leq p \leq 2;\\
\|\vf\|_{2,\mathbb{R}}\leq \|\vf\|_{p,\mathbb{R}}\leq \Big(\frac{p}{2}\Gamma(p)\Big)^{\frac{1}{p}}\|\vf\|_{2,\mathbb{R}}, \quad &p > 2
\end{align*}
hold for functions on the line. These inequalities are also sharp.
\end{Th}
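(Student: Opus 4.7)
The inequalities on $\mathbb{R}$ themselves follow at once from the interval case by restriction. For any $\vf\in\BMO(\mathbb{R})$ and any interval $J\subset\mathbb{R}$, the restriction $\vf|_J$ lies in $\BMO(J)$ with $\|\vf|_J\|_{p,J}\leq \|\vf\|_{p,\mathbb{R}}$, i.e. the $\BMO_p$-analog of~\eqref{SimpleEmbedding}. Applying~\eqref{EquivalentNormsP<2} and~\eqref{EquivalentNormsP>2} to $\vf|_J$ and taking the supremum over $J$ yields the upper and lower bounds on $\mathbb{R}$. The entire content of the theorem is therefore the sharpness assertion.

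For sharpness, the discussion after Theorem~\ref{CoincedenceBMO} rules out a norm-preserving extension of a near-extremizer from $I$ to $\mathbb{R}$, so near-extremizers on $\mathbb{R}$ must be produced indirectly. My plan is to treat this as a special instance of the general transference principle underlying Theorem~\ref{CoincedenceBMO}, applied to the optimization problem $\sup_{\vf}\|\vf\|_{p,\mathbb{R}}/\|\vf\|_{2,\mathbb{R}}$, whose answer should coincide with the analogous supremum over $\BMO(I)$. Concretely, given a near-extremizer $\psi_n\in\BMO(I)$, I would first modify $\psi_n$ near $\partial I$ at negligible cost to both $\BMO_p$ and $\BMO_2$ norms so that its boundary values are close to $\av{\psi_n}{I}$, then define $\vf_n$ on $\mathbb{R}$ to equal the modified function on an affine copy $I_n$ of $I$ and to equal the constant $\av{\psi_n}{I}$ outside $I_n$. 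The goal is to show that both $\|\vf_n\|_{p,\mathbb{R}}$ and $\|\vf_n\|_{2,\mathbb{R}}$ are attained, up to a factor $1+o(1)$, at $J=I_n$, so that the ratio transfers.

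The main obstacle is the control of oscillation over intervals $J\subset\mathbb{R}$ that straddle one endpoint of $I_n$ without containing $I_n$ fully: for such $J$ the mean $\av{\vf_n}{J}$ is not automatically equal to $\av{\psi_n}{I}$, and crude bounds of the form $\int_J |\vf_n-\av{\vf_n}{J}|^p\leq \int_J|\vf_n-\av{\psi_n}{I}|^p$ produce a loss that must be shown to be $o(1)$. Handling this rigorously will likely require either a more careful affine extension outside $I_n$ tuned so that the mean over every straddling $J$ coincides with $\av{\psi_n}{I}$, or a compactness argument that leverages continuity of the interval sharp constant in the endpoint-value parameters. This tail-versus-core interaction is the technical heart of the argument, and it is precisely the purpose of the general transference principle of the paper to handle it uniformly, after which Theorem~\ref{LpNormsTheorem} becomes a direct corollary of~\eqref{EquivalentNormsP<2}--\eqref{EquivalentNormsP>2}.
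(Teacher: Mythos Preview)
Your opening paragraph is fine and matches the paper (this is the content of Remark~1.7). However, the rest is not a proof but a plan, and the concrete plan you describe is not the one the paper uses and has an unresolved gap you yourself identify.

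Two points. First, you overlook Remark~\ref{TwoConstants}: the sharpness of~\eqref{EquivalentNormsP<2} and of the left inequality in~\eqref{EquivalentNormsP>2} is elementary, witnessed by $\vf=\chi_{\mathbb{R}_+}-\chi_{\mathbb{R}_-}$. Only the sharpness of the upper bound for $p>2$ requires work.

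Second, for that nontrivial case the paper does \emph{not} take a near-extremizer on $I$, modify it near $\partial I$, and extend by a constant. That route runs into exactly the straddling-interval obstacle you describe, and you do not resolve it; the vague appeals to ``a more careful affine extension'' or ``a compactness argument'' are not arguments. The paper's approach is structurally different: it casts the problem in the Bellman-function framework of Section~\ref{S3} with $\Xi_0=\{x_2>x_1^2\}$, $\Xi_1=\{x_2>x_1^2+1\}$, $f(x_1,x_1^2)=|x_1|^p$, uses the known value $\Bell(0,1)=\tfrac{p}{2}\Gamma(p)$ from~\cite{SlavinVasyuninLp}, and then invokes Lemma~\ref{Transference}. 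That lemma, in turn, rests on Lemma~\ref{AntiBellman} (which produces a near-optimal simple $(\Omega,\partial\Xi_0)$-martingale) and Theorem~\ref{Main} (which converts such a martingale into a function on the \emph{circle} via the $\lambda$-homogenization and gluing of Lemmas~\ref{OneFunction}--\ref{Gluing}). The resulting near-extremizers are periodic functions on $\mathbb{R}$, not constant-outside-a-window extensions, and the straddling-interval issue is handled by the explicit estimates~\eqref{FirstEstimate}--\eqref{SecondEstimate} for the homogenized function. Your proposal does not engage with any of this machinery beyond naming it in the final sentence.
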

\begin{Rem}
The sharp inequality
\begin{equation*}
\frac{p}{2}\Gamma(p)\|\vf\|_{2,[0,1]}^{p-2}\av{|\varphi - \av{\varphi}{[0,1]}|^2}{[0,1]}\leq \av{|\varphi - \av{\varphi}{[0,1]}|^p}{[0,1]}, \quad 1\leq p \leq 2,
\end{equation*}
was obtained in~\textup{\cite{SlavinVasyuninLp}}. Using the same methods as in the proof of Theorem~\ref{LpNormsTheorem}\textup, one may prove analogous inequality and its sharpness for~$\BMO_2(\mathbb{R})$ functions as well.
\end{Rem}

One can also formulate an analog of the John--Nirenberg inequality for the~$\BMO_2$-norm. The sharp constants in this inequality were found in~\cite{Vasyunin} and~\cite{VasyuninVolberg}:
\begin{equation}\label{JohnNirenbergQuadraticNorm}
\frac{1}{|I|}\Big|\{x\in I\mid |\vf(x) - \av{\vf}{I}| \geq \lambda\}\Big| \leq 
\begin{cases} 
1,\quad &\lambda \leq \|\vf\|_{2,I};\\
\frac{\|\vf\|_{2,I}^2}{\lambda^2},\quad &\|\vf\|_{2,I}\leq\lambda \leq 2\|\vf\|_{2,I};\\
\frac{e^2}{4}e^{-\frac{\lambda}{\|\vf\|_{2,I}}}, \quad &2\|\vf\|_{2,I}\leq \lambda,
\end{cases}
\end{equation}
and this estimate is sharp in each of the cases.
\begin{Th}\label{WeakTypeTheorem}
The inequality~\eqref{JohnNirenbergQuadraticNorm} holds true with~$\BMO_2(I)$ norm replaced with~$\BMO_2(\mathbb{R})$ norm and is sharp in each of the cases.
\end{Th}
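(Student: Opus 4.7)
The inequality part reduces to the case of the interval by restriction. Fix $\vf \in \BMO_2(\mathbb{R})$ and an interval $J \subset \mathbb{R}$; the $p=2$ analog of~\eqref{SimpleEmbedding} gives $\|\vf|_J\|_{2,J} \le \|\vf\|_{2,\mathbb{R}}$. Define $F\colon(0,\infty)\to(0,1]$ by $F(t)=1$ for $t\le 1$, $F(t)=1/t^2$ for $1\le t\le 2$, and $F(t)=\tfrac{e^2}{4}e^{-t}$ for $t\ge 2$, so that the right-hand side of~\eqref{JohnNirenbergQuadraticNorm} equals $F(\lambda/\|\vf\|_{2,I})$; one checks that $F$ is continuous and non-increasing, hence $s\mapsto F(\lambda/s)$ is non-decreasing in $s>0$. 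Applying~\eqref{JohnNirenbergQuadraticNorm} to $\vf|_J \in \BMO_2(J)$ and then using monotonicity yields
\begin{equation*}
\frac{1}{|J|}\big|\{x\in J\mid |\vf(x)-\av{\vf}{J}|\ge\lambda\}\big|
\le F\big(\lambda/\|\vf|_J\|_{2,J}\big) \le F\big(\lambda/\|\vf\|_{2,\mathbb{R}}\big),
\end{equation*}
which is the desired bound on $\mathbb{R}$.

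For sharpness one cannot simply extend extremizers from $I$ to $\mathbb{R}$: as observed right after Theorem~\ref{CoincedenceBMO}, any such extension generically increases the $\BMO_p$ norm by a definite factor. Instead, the plan is to invoke the transference principle, which is the main result of the paper: each extremizing sequence for the weak-type extremal problem on $I$ corresponds to an extremizing sequence on $\mathbb{R}$ with the same asymptotic norm. For each of the three $\lambda$-regimes in~\eqref{JohnNirenbergQuadraticNorm}, the extremal sequences on $I$ constructed in~\cite{Vasyunin} and~\cite{VasyuninVolberg} are fed into the transference machinery to produce sequences $\tilde\vf_n\in\BMO_2(\mathbb{R})$ and subintervals $J_n\subset\mathbb{R}$ along which the weak-type ratio converges to $F(\lambda/\|\tilde\vf_n\|_{2,\mathbb{R}})$, with $\|\tilde\vf_n\|_{2,\mathbb{R}}$ converging to the normalized value used on the $I$-side. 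Combined with the first paragraph this gives sharpness in each of the three cases separately.

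The main difficulty is not the weak-type computation, which is essentially immediate from restriction and the monotonicity of $F$, but the transference construction in the second step. That construction is the substantive content of the paper and is developed in a uniform framework which simultaneously yields Theorems~\ref{CoincedenceBMO}, \ref{LpNormsTheorem}, and~\ref{WeakTypeTheorem}; the proof of the present theorem is obtained by specializing this framework to the Bellman-type extremal problem associated with the weak-type distribution in~\eqref{JohnNirenbergQuadraticNorm} and verifying that the hypotheses required by the general transference statement are met for this particular problem.
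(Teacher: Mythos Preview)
Your approach matches the paper's: the inequality on $\mathbb{R}$ follows by restriction (this is exactly Remark~\ref{TwoConstants} preceding remark, i.e., Remark~1.9), and sharpness comes from the transference machinery (Lemma~\ref{Transference}) applied to the Bellman problem with $f(x_1,x_1^2)=\chi_{[\lambda,\infty)}(|x_1|)$, using the value $\Bell(0,1)$ computed in~\cite{VasyuninVolberg}. So the plan is correct and essentially the paper's.

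Two small points where your write-up diverges from the paper. First, you propose to run the transference for all three regimes, but the paper observes (Remark~\ref{TwoConstants}) that the first case $\lambda\le\|\vf\|_{2}$ is elementary: the function $\chi_{\mathbb{R}_+}-\chi_{\mathbb{R}_-}$ already attains equality, so only the second and third regimes require the transference argument. Second, the paper actually constructs the near-extremizers as functions on the circle $\mathbb{T}$ (periodic functions on $\mathbb{R}$) via Theorem~\ref{Main} and Lemma~\ref{Transference}, with $\|\vf_1\|_{2,\mathbb{T}}\le 1+\delta$ and $\av{\vf_1}{\mathbb{T}}=0$; your description speaks of ``sequences $\tilde\vf_n\in\BMO_2(\mathbb{R})$ and subintervals $J_n$'' without identifying the periodic structure or the specific Bellman data $(\Xi_0,\Xi_1,f)$. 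As written, your second and third paragraphs are a plan rather than a proof: they defer the entire content to ``the transference principle'' without specifying the domain $\hat\Om(\Xi_0,\Xi_{1+\delta})$, the cost function $f$, or the identity~\eqref{eq210701} that links $\ClassC(\mathbb{R},\Om^{2,\eps})$ to the two-dimensional framework where Lemma~\ref{Transference} applies. Filling these in is exactly what the paper's short proof does.
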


\begin{Rem}
The inequalities~\eqref{EquivalentNormsP<2}\textup,~\eqref{EquivalentNormsP>2}\textup, and~\eqref{JohnNirenbergQuadraticNorm} are true for functions on the line\textup: they are simple consequences of~\eqref{SimpleEmbedding} and the corresponding inequalities for functions on the interval. The non-trivial part of Theorems~\ref{LpNormsTheorem} and~\ref{WeakTypeTheorem} is the sharpness of the said inequalities for functions on the line.
\end{Rem}

\begin{Rem}\label{TwoConstants}
We signalize that the sharpness of the inequality~\eqref{EquivalentNormsP<2}\textup, the first inequality in~\eqref{EquivalentNormsP>2}\textup, and the first estimate in~\eqref{JohnNirenbergQuadraticNorm}\textup, may be obtained elementary. Indeed\textup, if we choose~$I = [-1,1]$ and~$\vf = \chi_{[0,1]} - \chi_{[-1,0]}$\textup, then all these inequalities will turn into equalities. Now we need to construct a similar function~$\vf$ on the line. One may take~$\vf = \chi_{\mathbb{R}_+} - \chi_{\mathbb{R}_-}$. 
\end{Rem}

The~$\BMO$ functions are closely related to~$A_p$ weights. We survey some sharp inequalities for them in the next section.

\subsection{Reverse H\"older inequality}
Let~$p \in (1,\infty)$. The~$A_p$ constant of the weight~$w\in L_{1,\mathrm{loc}}(\mathbb{R})$,~$w> 0$, is defined by the formula
\begin{equation}\label{Apdefinition}
[w]_{A_p(\mathbb{R})} = \sup_{J\ \hbox{\tiny interval}} \av{w}{J}\av{w^{-\frac{1}{p-1}}}{J}^{p-1}.
\end{equation}
One can also define the~$A_{\infty}$ constant by passing to the limit:
\begin{equation}\label{Ainftydefinition}
[w]_{A_\infty(\mathbb{R})} = \sup_{J\ \hbox{\tiny interval}} \av{w}{J}e^{-\av{\log w}{J}}.
\end{equation}
The weights whose~$A_p$ constant is finite are called Muckenhoupt weights. These weights satisfy a self-improvement property called the Reverse H\"older inequality:
\begin{equation}\label{eq200701}
\forall p,C\quad \exists \delta = \delta(p,C),c = c(p,C,\delta) \ \hbox{ such that }\  [w]_{A_p(\mathbb{R})} \leq C \quad \Longrightarrow \quad \forall J\ \ \av{w^q}{J}^{\frac{1}{q}} < c\av{w}{J}, \quad q \in [1,1+\delta).
\end{equation}
Similar to the~$\BMO$ case, one can define the classes~$A_p(I)$ and~$A_\infty(I)$ of weights on an interval~$I$ by restricting the supremum in formulas~\eqref{Apdefinition} and~\eqref{Ainftydefinition} to the set of intervals~$J\subset I$. In this case, the sharp values for~$\delta$ and sharp estimates for~$c$ in~\eqref{eq200701} were established in~\cite{VasyuninMuckenhoupt} and~\cite{Vasyunin3}. The formulas for the constants are quite large, so we refer the reader to the original papers.
\begin{Th}\label{MuckenhouptTheorem}
For any~$p,q$\textup, and~$C$\textup, 
\begin{equation}\label{eq210702}
\sup_{[w]_{A_p(\mathbb{R})} \leq C}\frac{\av{w^q}{I}^{\frac{1}{q}}}{\av{w}{I}} = 
\sup_{[w]_{A_p(I)} \leq C}\frac{\av{w^q}{I}^{\frac{1}{q}}}{\av{w}{I}}.
\end{equation}
\end{Th}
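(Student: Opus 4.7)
The direction ``$\leq$'' in \eqref{eq210702} is immediate from the restriction embedding: for every weight $w$ on $\mathbb{R}$ with $[w]_{A_p(\mathbb{R})} \leq C$, the restriction $w|_I$ satisfies $[w|_I]_{A_p(I)} \leq C$, because the supremum in \eqref{Apdefinition} is then taken over a subfamily of the intervals entering the definition of $A_p(\mathbb{R})$, while the value of $\av{w^q}{I}^{1/q}/\av{w}{I}$ depends only on $w|_I$. Every admissible weight on the left-hand side thus yields an admissible weight on the right-hand side with the same value of the ratio.

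For the substantive direction ``$\geq$'' the plan is to place the Reverse H\"older problem into the general transference framework announced in the abstract. After normalising $I=[0,1]$ by scale and translation invariance, one takes a near-extremiser $w$ on $I$ with $[w]_{A_p(I)} \leq C$ and ratio $\av{w^q}{I}^{1/q}/\av{w}{I}$ arbitrarily close to the right-hand supremum, and constructs an extension $\tilde w$ to $\mathbb{R}$ which coincides with $w$ on $I$ and satisfies $[\tilde w]_{A_p(\mathbb{R})} \leq C + \eta$ for any prescribed $\eta > 0$. A natural template is to replace $w$ near each endpoint by a close-by weight that is constant on small one-sided neighbourhoods $[0,\delta]$ and $[1-\delta, 1]$ (a perturbation that changes the target ratio by $O(\delta)$), adjust so the two boundary constants agree, and extend by that single constant to $\mathbb{R}\setminus[\delta,1-\delta]$.

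The principal obstacle is the verification that $[\tilde w]_{A_p(\mathbb{R})} \leq C+\eta$. The check is routine on intervals $J \subset I$ (bound inherited from $w$) and on intervals $J$ disjoint from $I$ (quantity equals $1$), but intervals $J$ that straddle a boundary of $I$, or contain $I$ properly, are the genuine difficulty. The $A_p$-functional $(x,y) \mapsto xy^{p-1}$ on $(0,\infty)^2$ is log-concave but not concave, so a naive convex-combination argument bounding the straddling $A_p$-quantity by the interior one does not close. The resolution is to exploit the explicit Bellman-function description of the sharp Reverse H\"older constants from \cite{VasyuninMuckenhoupt} and \cite{Vasyunin3}: the constancy of $\tilde w$ on long tails forces the pair $(\av{\tilde w}{J}, \av{\tilde w^{-1/(p-1)}}{J})$ on any straddling $J$ into the same sublevel set of the relevant Bellman function as a pair realised by $w$ on a suitable subinterval of $I$, hence to satisfy the same $A_p$-bound up to $O(\eta)$. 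Sending $\eta \to 0$ and combining with scale invariance yields the equality in \eqref{eq210702}, in direct parallel with the routes taken for Theorems \ref{CoincedenceBMO}, \ref{LpNormsTheorem}, and \ref{WeakTypeTheorem}.
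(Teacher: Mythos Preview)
Your easy direction is fine, but the substantive direction has a genuine gap, and in fact your overall strategy is one the paper explicitly rules out. Immediately after stating Theorem~\ref{MuckenhouptTheorem} the paper notes that \emph{it is impossible to extend an arbitrary $A_p(I)$ weight to a weight on $\mathbb{R}$ having the same or almost the same $A_p$ constant}. The near-extremisers for the Reverse H\"older problem are power weights $x^\alpha$ with $\alpha$ close to the critical exponent, exactly the analogues of $\log x$ in the $\BMO$ case; for those the paper asserts (and for $\log x$ proves via \cite{Shanin}) that every extension strictly increases the relevant constant by a fixed factor. Your proposed constant-tail extension does not escape this: on an interval $J$ straddling an endpoint, the pair $(\av{\tilde w}{J},\av{\tilde w^{-1/(p-1)}}{J})$ moves along a straight segment in the positive quadrant, and the functional $(x,y)\mapsto xy^{p-1}$ is log-\emph{concave}, which gives a lower bound along the segment, not an upper bound. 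Your last paragraph acknowledges this and appeals to ``the explicit Bellman-function description,'' but that is a gesture, not an argument; nothing in \cite{VasyuninMuckenhoupt} or \cite{Vasyunin3} furnishes the sublevel-set inclusion you need.

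The paper's route is fundamentally different and does \emph{not} extend functions. It fixes $\tilde C<C$, invokes Lemma~\ref{AntiBellman} to produce a simple $(\Omega,\partial\Xi_0)$-martingale $M$ (with $\Xi_0,\Xi_{\tilde C}$ as in~\eqref{eq210705}) whose terminal expectation nearly realises the $A_p(I)$ supremum, lifts $M$ to a measure-valued $(\hat\Om,\Delta)$-martingale, and then applies Theorem~\ref{Main} (built on the $\lambda$-homogenisation Lemma~\ref{Gluing}) to manufacture a \emph{new} simple function on $\mathbb{T}$ with the prescribed distribution. The resulting weight $w_1$ satisfies $[w_1]_{A_p(\mathbb{T})}\leq C$ via~\eqref{eq210703}--\eqref{eq210704}; continuity of $R(C,q)$ in $C$ from below absorbs the gap between $\tilde C$ and $C$. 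This is what ``in direct parallel with Theorems~\ref{LpNormsTheorem} and~\ref{WeakTypeTheorem}'' actually means here: transference of martingales via homogenisation, not extension of near-extremisers.
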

Similar to the~$\BMO$ case, any Muckenhoupt weight on~$\mathbb{R}$ might be restricted to an interval giving the weight with smaller or the same constant:
\begin{equation}\label{TrivialEmbedding2}
[w|_{I}]_{A_p(I)} \leq [w]_{A_p(\mathbb{R})}.
\end{equation} 
In this case it is also impossible to extend an arbitrary~$A_p(I)$ weight to a weight on~$\mathbb{R}$ having the same or almost the same~$A_p$ constant.

\subsection{Functions on the circle}
We will derive Theorems~\ref{CoincedenceBMO},~\ref{LpNormsTheorem},~\ref{WeakTypeTheorem}, and~\ref{MuckenhouptTheorem} from a more general principle. Though these three theorems claim the coincidence of certain inequalities for functions on an interval and on the line, the general principle will rather work with functions on the interval and on the circle. A function on the circle might be thought of as a periodic function on the line. More precisely, if~$\vf$ is a function on the circle~$\mathbb{T}$, let~$\vf_{\per}$ be its periodic realization:
\begin{equation*}
\vf_{\per}(x) = \vf(e^{2\pi i x}), \quad x\in\mathbb{R}. %\text{\cre{ MOZHET BEZ COMPLEX?}}
\end{equation*}
Define the classes of functions on the circle by the rule:
\begin{align*}
[w]_{A_p(\mathbb{T})} = [w_{\per}]_{A_p(\mathbb{R})};\\
\|\vf\|_{p,\mathbb{T}} = \|\vf_{\per}\|_{p,\mathbb{R}}.
\end{align*}
The class~$A_p(\mathbb{T})$ might be thought of as a subclass of~$A_p(\mathbb{R})$.
Which, in its turn, might be thought of as a subclass of~$A_p([0,1])$ by~\eqref{TrivialEmbedding2}.
Thus, the sharp constants in various forms of the John--Nirenberg and the Reverse H\"older inequalities get better or remain the same when we pass from functions on the interval to functions on the line, and then to functions on the circle. Our main result is that these constants remain the same. We postpone its formulation to the following section.

We warn the reader that our definition of~$\BMO(\mathbb{T})$ and~$A_p(\mathbb{T})$ differ from the classical. Usually, one takes supremum of oscillations over geometric arcs (arcs that cover only a part of the circle). This usual definition fits the general approach to~$\BMO$ on manifolds (see~\cite{BrezisNirenberg}). We allow ``long arcs'' that can cover the circle several times. This makes the class of functions narrower. Thus, the constants in the inequalities we consider are the same for the case of classically defined~$\BMO$ and~$A_p$ {on $\mathbb{T}$}.

It is a common practice in analysis to transfer various statements from the circle to the line and vice versa (more general, from the torus to the Euclidean space of the same dimension). The classical transference principle for Fourier multipliers may be found in~\cite{Grafakos}. Since we claim that the John--Nirenberg inequalities and the Reverse H\"older inequalities are the same for functions on the line and the circle, it is natural to seek for a form of the transference principle that will explain this coincidence. The answer is not as straightforward as we wish. The transference principle exists, however, we will not transfer the functions or weights themselves%, but rather some optimization problems we will introduce in the forthcoming section
.

Our approach is based on two main ideas. The first one is the representation of~$\BMO(I)$ functions,~$A_p(I)$ weights, and more general objects, as terminal distributions of specific vector-valued martingales. The idea can be traced to~\cite{Vasyunin} and~\cite{VasyuninMuckenhoupt}. We will rely upon more general and modern results and definitions of~\cite{SZ}. The second idea is that for any of these specific martingales, one may construct a function in the corresponding class on the circle, whose distribution almost equals the terminal distribution of the martingale. This construction is based on appropriate rescaling of the line.

We will start with the second idea in Section~\ref{S2}. Lemma~\ref{Gluing} is the core of the matter. In Section~\ref{S3}, we will show how to derive the results for the~$\BMO_2$ and~$A_p$ cases from the general results of Section~\ref{S2} and the theory from~\cite{SZ}. Finally, Section~\ref{S4} contains the treatment of the~$\BMO_p$ case. Here we do not have a satisfactory theory and simply construct the needed martingale. A technical (and seemingly, known to the experts) statement that truncations do not increase the~$\BMO_p$ norm is needed to justify this construction. We state and prove it in the appendix.

The authors would like to thank Fedor Nazarov for communicating the main idea (using the homogenization operation in this context) to them and Leonid Slavin for attracting their attention to these questions and for exposition advice.

\section{Constructing functions from martingales}\label{S2}
In this section, we will introduce a definition of a function class that includes both $\BMO$ and Muckenhoupt weights. Similar definitions were earlier used in~\cite{CR2},~\cite{SVZ}, and~\cite{SZ}. The classes we define below are more general (we work with~$\BMO_p$ as well). %Our proofs will rely upon martingale representations for Bellman functions obtained in~\cite{SZ}. In this section, we will explain how to construct functions in~$\BMO(\mathbb{T})$ and~$A_p(\mathbb{T})$ from some martingales.

Let~$Y \subset \mathbb{R}^d$ be a closed set, let~$X$ be a measurable space equipped with a~$\sigma$-finite measure~$\mu$. If~$\mu$ is a probability measure, then for any measurable function~$\vf \colon X \to Y$ we define its distribution~$\mu_\vf$ (we think of~$\vf$ as of a random variable):
\begin{equation*}
\mu_\vf(A) = \mu(\vf^{-1}(A)), \quad A \subset Y, \ A \text{ is Borel}.
\end{equation*}
Note that in this case~$\mu_\vf$ is a Borel probability measure on~$Y$. 

Now let~$B \subset X$ be a measurable set, $0<\mu(B)<\infty$. We can make it a probability space by normalizing measure~$\mu$ to~$\frac{\mu}{\mu(B)}$ and restricting it on $B$. This allows us to treat~$\vf|_{B}$ as a random variable and work with its distribution~$\mu_{\vf|_B}$.

If we look at formulas~\eqref{BMOp},~\eqref{Apdefinition}, and~\eqref{Ainftydefinition} we see that both the~$\BMO$ norm and the~$A_p$ constant may be expressed in terms of measures~$\mu_{\vf|_{J}}$ (or~$\mu_{w|_J}$), where~$J$ runs through a certain family of intervals (that depends whether we define our class of functions on~$I$,~$\mathbb{R}$, or~$\mathbb{T}$), and $\mu$ is the Lebesgue measure. %Moreover, these expressions do not depend on~$J$ \cre{NE PONYAL!}. 

Consider the space~$\M(Y)$ consisting of all signed measures of bounded variation on~$Y$. This space is a subspace of the Banach dual to the space of all continuous bounded functions on~$Y$. So we equip~$\M(Y)$ with the weak-* topology. The space~$\M(Y)$ contains the set~$\Mpr(Y)$ of all probability measures. %The weak-* topology on~$\Mpr(Y)$ is metrizable with the help of the L\'evy--Prokhorov metric. 
We also note that~$\Mpr(Y)$ is the closed convex hull of the set
\begin{equation}\label{DeltaBoundary}
\Delta(Y) = \{\delta_y\mid y\in Y\}
\end{equation}
consisting of delta measures.

%Until the end of this section, we restrict ourselves to the case~$X = \mathbb{R}$.
\begin{Def}\label{ClassCircle}
Let~$\Om$ be an open subset of~$\Mpr(Y)$ such that $\Delta(Y)\subset \Om$. %, let $\Om_1 = \Mpr(Y)\setminus\Om$.
%Let~$\Om_1$ be a closed subset of~$\Mpr(Y)$ such that~$\Om_1 \cap \Delta = \varnothing$. %Let~$\I$ be a collection of all finite intervals of the line~$\mathbb{R}$. 
We say that a function~$\vf\colon \mathbb{T}\to Y$ belongs to~$\ClassC(Y,\Om)$ if %the closure of the set 
$$\cl\big\{\mu_{\vf_{\per}|_J} \mid J\subset \mathbb{R} \text{ is an interval}\big\} \subset \Om.$$
%PRI CHEM TUT TEPER $\Om_1$?
\end{Def}

%We will often denote the set~$\Mpr(Y)\setminus\Om_1$ by~$\Om$. 
%Note that~$\Om$ is a relatively open subset of~$\Mpr(Y)$ containing~$\Delta$. The set~$\tilde{\Om}$ is needed for technical purposes.

%We will also often denote~$\vf_{\per}$ simply by~$\vf$ if this does not lead to confusion (the same principle works for other functions as well).

To avoid technical difficulties, we will often work with simple functions. A function~$\vf \colon X\to Y$ is called simple if it attains finite number of values on a set of full measure. In other words, $\mu_{\vf}$ is a finite convex combination of delta measures. The class~$\ClassCs(Y,\Om)$ consists of simple functions in~$\ClassC(Y,\Om)$. In what follows we will often omit the symbols $Y$,~$\Om$ and write simply $\Delta, \ClassC$, and $\ClassCs$.

We will need a useful notion introduced in~\cite{SZ}.
\begin{Def}\label{Mart}
Let~$V$ be a linear space\textup, let~$\Dell \subset \Omm \subset V$. %be a closed set and let~$\Omega\subset V$ be a set such that~$\Delta \subset \Omega$. 
Let~$S = \{S_n\}_n$ be a filtration on a probability space consisting of finite algebras~$S_n$. Let also~$S_0$ be the trivial algebra. A $\Omm$-valued martingale~$M = \{M_n\}_n$ adapted to~$S$ is called a simple~$(\Omm,\Dell)$-martingale provided
\begin{enumerate}[1\textup)]
\item for any atom~$\omega \in S_n$ the convex hull of %~$M_n(\omega)$ and~
$\{M_{n+1}(\tilde{\omega})\}_{\tilde{\omega}\in \omega}$ belongs to~$\Omm$\textup,
\item there exists~$n_0$ such that~$M_n = M_{n_0}$ for~$n > n_0$ and~$M_{n_0} \in \Dell$.
\end{enumerate}
By $M_\infty$ we denote $M_{n_0}$ with $n_0$ as above.
\end{Def}
%\begin{Def}\label{Mart}
%%Let~$V$ be a linear space, let~$\Delta \subset \Omega \subset V$. %be a closed set and let~$\Omega\subset V$ be a set such that~$\Delta \subset \Omega$. 
%Let~$S = \{S_n\}_n$ be a filtration on a probability space consisting of finite algebras~$S_n$. Let also~$S_0$ be the trivial algebra. An~$\Mpr(Y)$-valued martingale~$M = \{M_n\}_n$ adapted to~$S$ is called a simple~$(\Om,\Delta)$-martingale provided
%\begin{enumerate}[1)]
%\item for any atom~$\omega \in S_n$ the convex hull of %~$M_n(\omega)$ and~
%$\{M_{n+1}(\tilde{\omega})\}_{\tilde{\omega}\in \omega}$ belongs to~$\Om$,
%\item there exists~$n_0$ such that~$M_n = M_{n_0}$ for~$n > n_0$ and~$M_{n_0} \in \Delta$.
%\end{enumerate}
%By $M_\infty$ we denote $M_{n_0}$ with $n_0$ as above.
%\end{Def}

In this definition,~$\Omm$ and~$\Dell$ might be arbitrary sets in a vector space $V$. We will mostly use this definition for~$\Omm=\Om$,~$\Dell=\Delta$, and $V= \M$ defined above. We consider only simple martingales to avoid technical difficulties. 
Definition~\ref{Mart} slightly differs from the corresponding definition in~\cite{SZ}.
\begin{Th}\label{Main}
Let $\Om$ be an open subset of $\M(Y)$ such that $\Delta \subset \Om$. Let~$M$ be a simple~$(\Om,\Delta)$ martingale. Then\textup, there exists~$\vf \in \ClassCs$ such that~$\mu_{\vf} = M_0$.
\end{Th}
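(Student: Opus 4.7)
The plan is to induct on the stopping time $n_0$ from Definition~\ref{Mart}. For the base case $n_0 = 0$, the martingale is constant with $M_0 = \delta_{y_0} \in \Delta$, and the constant function $\vf \equiv y_0$ works: its distribution is $M_0$, and every restriction distribution equals $\delta_{y_0} \in \Delta \subset \Om$, so $\vf \in \ClassCs$.

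For the inductive step, let $\omega_1, \ldots, \omega_k$ be the $S_1$-atoms with masses $p_i$ and values $\nu_i := M_1(\omega_i)$. Definition~\ref{Mart} applied at the root atom gives $\conv\{\nu_1, \ldots, \nu_k\} \subset \Om$, while the martingale property gives $M_0 = \sum_i p_i \nu_i$. Conditioning on each $\omega_i$ produces a simple $(\Om, \Delta)$-martingale of stopping time strictly less than $n_0$ with initial value $\nu_i$, so the inductive hypothesis supplies $\vf_i \in \ClassCs$ with $\mu_{\vf_i} = \nu_i$. What remains is to glue the $\vf_i$ into a single $\vf \in \ClassCs$ with $\mu_\vf = \sum_i p_i \nu_i$ --- this is precisely the content of Lemma~\ref{Gluing}.

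For the gluing I would partition $\mathbb{T}$ into consecutive arcs $A_1, \ldots, A_k$ of lengths $p_1, \ldots, p_k$ and, on each $A_i$, place $N_i$ affinely rescaled periodic copies of $\vf_i$, where $N_i \in \mathbb{N}$ is a large parameter. The identity $\mu_\vf = \sum_i p_i \mu_{\vf_i} = M_0$ is immediate. For any $J \subset \mathbb{R}$ whose lift lies inside the lift of a single arc $A_i$, the distribution $\mu_{\vf_{\per}|_J}$ is, up to affine rescaling, a restriction distribution of $\vf_{i,\per}$ and hence lies in $\Om$ by induction. For $J$ meeting several arcs, $\mu_{\vf_{\per}|_J}$ is a mixture of restriction distributions of the $\vf_{i,\per}$'s weighted by $|J \cap \mathrm{lift}(A_i)|/|J|$; as $N_i \to \infty$ the bulk contribution from each arc concentrates at $\nu_i$, with only two boundary partial copies per arc contributing a share of order $1/N_i$. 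Limit distributions therefore lie in $\conv\{\nu_1, \ldots, \nu_k\} \subset \Om$, and because this compact set is positively separated from the complement of the open $\Om$, choosing all $N_i$ large enough keeps every $\mu_{\vf_{\per}|_J}$ inside $\Om$.

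The genuine obstacle is the case of a very short $J$ straddling the interface between two consecutive arcs $A_i$ and $A_{i+1}$: the limit as $|J| \to 0$ is a convex combination $\alpha \rho_i + (1-\alpha) \rho_{i+1}$ of shrinking-interval restriction distributions of the two neighbors. Each $\rho_i$ lies in $\Om$ by induction, but the segment between them need not, since $\Om$ is only open. A natural remedy is to parametrize each $\vf_i$ on $\mathbb{T}$ so that its ``seam'' --- the point cut when it is placed on $A_i$ --- falls at a single anchor value $y^{\ast} \in Y$ shared by all arcs (arranged, if necessary, by inserting vanishingly small constant bridges of value $y^{\ast}$ and redistributing a compensating amount of mass inside each $\vf_i$), whereupon the straddled limit collapses to $\delta_{y^{\ast}} \in \Delta \subset \Om$ and the homogenization argument closes.
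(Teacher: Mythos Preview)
Your induction on $n_0$ and the reduction to a gluing step are exactly how the paper argues: the paper's proof of this theorem is two sentences that invoke Lemma~\ref{Gluing} iteratively, and your first two paragraphs reproduce that. Had you stopped after ``this is precisely the content of Lemma~\ref{Gluing}'' (noting that iterated two-at-a-time application works because every intermediate segment stays in the convex hull $\conv\{\nu_1,\dots,\nu_k\}\subset\Om$), the proof would be complete and identical to the paper's.

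The problem lies in the direct gluing you sketch in the last two paragraphs. Placing $N_i$ \emph{equal-length} copies of $\vf_i$ on $A_i$ leaves you with intervals $J$ whose length is comparable to one copy and which straddle an interface. For such a $J$, $\mu_{\vf_{\per}|_J}$ is a convex combination of a restriction distribution of $\vf_{i,\per}$ and one of $\vf_{i+1,\per}$; each lies in $\Om$, but the segment between them need not, and this is not a limit you can push away by letting $N_i\to\infty$. Your seam fix only handles the limit $|J|\to 0$; for $|J|$ on the order of a copy the restriction distributions on either side are genuine (non-delta) elements of $\Om$, and matching boundary values does nothing for them. The ``vanishingly small constant bridges'' just move the bad interface from arc/arc to arc/bridge.

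The paper's Lemma~\ref{Gluing} avoids this entirely by using the $\lambda$-homogenization $\Gamma_\lambda$, whose copies shrink \emph{geometrically to zero} at every interface. Consequently any interval that crosses an interface necessarily contains infinitely many copies and is therefore $r$-long for every $r$; the short-interval analysis then only ever sees a single $\Gamma_\lambda[\vf_i]$, while the long-interval analysis lands you near $[\mu_{\vf_0},\mu_{\vf_1}]$, which is assumed to lie in $\Om$. That vanishing-scale trick at the interface is the missing idea in your construction.
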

This theorem serves as one of two main ingredients in the proofs of Theorems~\ref{CoincedenceBMO},~\ref{LpNormsTheorem},~\ref{WeakTypeTheorem}, and~\ref{MuckenhouptTheorem}. It may be reduced to Lemma~\ref{Gluing} below using induction (we will provide more details at the end of this section).
\begin{Le}\label{Gluing}
Let~$\vf_0$ and~$\vf_1$ be two functions in the class~$\ClassCs$. Assume that the segment~$[\mu_{\vf_0},\mu_{\vf_1}]$ lies in~$\Om$ entirely. For any~$\alpha \in (0,1)$\textup, there exists~$\vf_{\alpha}\in\ClassCs$ such that
\begin{equation}\label{GluingIdentity}
\mu_{\vf_\alpha} = (1-\alpha)\mu_{\vf_0} + \alpha\mu_{\vf_1}.
\end{equation}
\end{Le}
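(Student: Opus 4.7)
The plan is to produce $\vf_\alpha$ by juxtaposing many rescaled copies of $(\vf_0)_{\per}$ and $(\vf_1)_{\per}$ inside a single period of $\mathbb{T}$. For a large integer $N$ to be chosen, set
\begin{equation*}
(\vf_\alpha)_{\per}(x)=(\vf_0)_{\per}\Big(\tfrac{Nx}{1-\alpha}\Big)\text{ for }x\in[0,1-\alpha],\qquad (\vf_\alpha)_{\per}(x)=(\vf_1)_{\per}\Big(\tfrac{N(x-(1-\alpha))}{\alpha}\Big)\text{ for }x\in[1-\alpha,1],
\end{equation*}
and extend with period $1$. Each rescaled copy realizes exactly one full period of $(\vf_i)_{\per}$, so the distribution identity~\eqref{GluingIdentity} is immediate.

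The real work is to verify that $\vf_\alpha\in\ClassCs$. The compact sets $K_i=\cl\{\mu_{(\vf_i)_{\per}|_J}\mid J\subset\mathbb{R}\}$ lie in $\Om$ by hypothesis; together with the segment $[\mu_{\vf_0},\mu_{\vf_1}]\subset\Om$ they admit a common $\eta$-neighborhood inside the open set $\Om$ for some $\eta>0$. The aim is to pick $N$ so large that every $\mu_{(\vf_\alpha)_{\per}|_J}$ lies in this neighborhood. A case analysis on the length $\ell=|J|$ and the position of $J$ relative to the macro-boundary $\{x\equiv 1-\alpha\pmod 1\}$ yields: (a) $J$ inside a single rescaled copy of $\vf_0$ or $\vf_1$ --- distribution in $K_0$ or $K_1$; (b) $J$ spanning many copies inside one macro-arc --- distribution within $O(1/N)$ of $\mu_{\vf_i}$ by periodic averaging; (c) $\ell\geq 1$ --- distribution within $O(1/\ell)$ of $\mu_\alpha$; (d) $J$ crossing the macro-boundary with both sides containing many full copies --- convex combination within $O(1/N)$ of the segment.

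The principal obstacle is case (d) when one or both sides of $J$ has length of order $1/N$ or less: the sub-distribution is then a generic element of $K_i$ rather than a perturbation of $\mu_{\vf_i}$, while its weight in the convex combination stays bounded away from $0$ and $1$. To handle this I would first exploit the freedom to rotate $\vf_0$ and $\vf_1$ on $\mathbb{T}$, which changes neither their distributions nor their membership in $\ClassCs$, so that the values of $\vf_0$ just before and $\vf_1$ just after the macro-boundary coincide; then, as $J$ shrinks across the boundary, both sub-distributions concentrate on that common value and the combination collapses to a delta measure in $\Delta\subset\Om$, and openness of $\Om$ together with continuity of $\mu_{(\vf_\alpha)_{\per}|_J}$ in $J$ covers moderately short straddles once $N$ is large. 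If $\supp\vf_0$ and $\supp\vf_1$ are disjoint so that no rotation can match, a further refinement is needed --- for instance, replacing the two-block juxtaposition by $M\to\infty$ alternating $\vf_0$/$\vf_1$ sub-blocks, or inserting intermediate constant-function gluings whose segments with $\mu_{\vf_i}$ already lie in $\Om$.
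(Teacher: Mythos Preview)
Your construction has a genuine gap, precisely in case~(d), and none of the proposed patches close it.

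The defect is scale-invariance at the macro-boundary. Take an interval $J$ of length $c/N$ (with $c$ fixed and comparable to $\min(\alpha,1-\alpha)$) centred at the point $x\equiv 1-\alpha$. Zooming by factor $N$, the picture you see is independent of $N$: a partial copy of $(\vf_0)_{\per}$ on the left glued to a partial copy of $(\vf_1)_{\per}$ on the right. Hence $\mu_{(\vf_\alpha)_{\per}|_J}$ is a convex combination $\beta\nu_0+(1-\beta)\nu_1$ with $\nu_i$ a \emph{generic} element of $K_i$ and $\beta$ ranging over all of $(0,1)$. You only know that $K_0,K_1\subset\Om$ and that the single segment $[\mu_{\vf_0},\mu_{\vf_1}]\subset\Om$; nothing forces $\conv(K_0\cup K_1)\subset\Om$. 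Increasing $N$ does not help, because the bad family of intervals scales with $1/N$. Your rotation-and-shrink argument handles $|J|\to 0$ (the combination collapses to a delta), but it cannot touch the intermediate scale $|J|\sim 1/N$, which persists for every $N$. The same objection applies to the alternating-blocks idea: at each micro-boundary you again have an abrupt transition between uniform-size copies, and the same scale-invariant bad intervals appear. The ``intermediate constants'' suggestion would require the segment from each $\nu_i\in K_i$ to a delta measure to lie in $\Om$, which again is not assumed.

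The paper breaks this scale-invariance by a different geometric idea: the $\lambda$-homogenization $\Gamma_\lambda$ partitions $[-\tfrac12,\tfrac12]$ into intervals whose lengths decay \emph{geometrically} towards the endpoints $\pm\tfrac12$, each carrying a full rescaled copy of $\vf_i$. One then places $\Gamma_\lambda[\vf_1]$ on $[0,\alpha]$ and $\Gamma_\lambda[\vf_0]$ on $[\alpha,1]$. Because the partition intervals accumulate at the gluing points $0,\alpha,1$, any interval $J$ that crosses a gluing point necessarily covers \emph{infinitely many} full copies on each side, so the partial-copy remainder has negligible relative mass (this is the ``$r$-long'' case, estimate~\eqref{FirstEstimate}), and $\mu_{(\vf_\alpha)_{\per}|_J}$ is forced to lie close to the segment $[\mu_{\vf_0},\mu_{\vf_1}]$. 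Conversely, any interval covering only finitely many partition intervals (``$r$-short'') must lie entirely inside one piece, where the single-function Lemma~\ref{OneFunction} already applies. This geometric decay toward the gluing points is the missing ingredient in your proposal.
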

%\begin{proof}
%The mainThe operation of homogenization is the main instrument of the proof of Lemma~\ref{Gluing} defined below. We start with linear transfers between the intervals.

The proof of Lemma~\ref{Gluing} requires some efforts. We need to introduce some notation.
\begin{Def}
Let~$\vf$ be a function on the interval~$I = [i_1,i_2]$ and let~$J$ be an interval. Define the function~$\vf_{J}$ by the rule
\begin{equation*}
\vf_{\scriptscriptstyle{J}}(x) = \vf\Big((x-j_1)\frac{i_2 - i_1}{j_2 - j_1} + i_1\Big), \quad x \in J = [j_1,j_2].
\end{equation*}
We call~$\vf_{J}$ the transfer of~$\vf$ to~$J$.
\end{Def}
This rescaling does not affect the distribution:
\begin{equation}\label{Rescaling}
\mu_{\vf} = \mu_{\vf_{J}}.
\end{equation}
\begin{Def}
Let~$\lambda \in (0,1)$. Consider the splitting of~$[-\frac12,\frac12]$ into subintervals\textup:
\begin{equation*}
I_{k,\pm} = \Big[\pm\frac{1-\lambda^{k-1}}{2}, \pm\frac{1 - \lambda^k}{2}\Big],\quad k\in \mathbb{N}.
\end{equation*}
Let~$\vf$ be a function defined on~$[-\frac12,\frac12]$. We call the function~$\Gamma_{\lambda}[\vf]$ defined on the same interval by the formula
\begin{equation*}
\Gamma_{\lambda}[\vf] = \vf_{I_{k,\pm}} \ \hbox{on the interval}\ I_{k,\pm},\quad k\in\mathbb{N},
\end{equation*}
the~$\lambda$-homogenization of~$\vf$.
In the case where~$\vf$ is initially defined on the circle\textup, its~$\lambda$-homogenization is defined as the periodic extension of the function~$\Gamma_{\lambda}[\vf_{\per}|_{[-\frac12,\frac12]}]$.
\end{Def}
Note that~$\Gamma_{\lambda}[\vf]$ has the same distribution as~$\vf$:
\begin{equation*}
%\forall E \subset \FixedBoundary\Omega \qquad \Big|\Big\{t\in \Big[-\frac12,\frac12\Big]\,\Big|\; \vf(t) \in E\Big\}\Big| = \Big|\Big\{t\in \Big[-\frac12,\frac12\Big]\,\Big|\; \Gamma_{\lambda}[\vf](t) \in E\Big\}\Big|.
\mu_{\Gamma_{\lambda}[\vf]} = \mu_{\vf}.
\end{equation*}
It is convenient to extend the splitting~$\{I_{k,\pm}\}$ periodically to the whole line. The most important property of the partition obtained is that the fraction of the lengths of any two neighbor intervals does not exceed~$\lambda$:
\begin{equation}\label{BoundedFraction}
\lambda \leq \frac{|I_{k,\pm}|}{|I_{k+1,\pm}|} \leq \lambda^{-1}.
\end{equation}
\begin{Le}\label{OneFunction}
Let~$\vf \in \ClassCs$. There exists~$\lambda_0 \in (0,1)$ depending on~$Y, \Om,$ and~$\varphi$ only and such that the function~$\Gamma_{\lambda}[\vf]$ belongs to~$\ClassCs$ for any~$\lambda \in (\lambda_0,1)$.
\end{Le}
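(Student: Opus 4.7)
\medskip
My plan is to reduce the statement to a compactness argument on the space of distributions. Since~$\vf$ is simple, all measures~$\mu_{\vf_{\per}|_J}$ are supported on the common finite set of values attained by~$\vf$, hence lie in a finite-dimensional simplex; consequently the set
\[
K := \cl\{\mu_{\vf_{\per}|_J} \mid J \subset \mathbb{R} \text{ is an interval}\}
\]
is compact and, by the assumption $\vf\in\ClassCs$, is contained in the open set~$\Om$. Choose $\varepsilon>0$ such that the $\varepsilon$-neighborhood of~$K$ in total variation distance still lies in~$\Om$. It then suffices to show that, for~$\lambda$ close enough to~$1$ and for every interval~$J\subset\mathbb{R}$, the distribution $\mu_{\Gamma_\lambda[\vf]_{\per}|_J}$ lies within total variation distance~$\varepsilon$ of some element of~$K$.

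\medskip
The starting point is a structural description of this measure. Let $I_L$ and $I_R$ be the partition intervals (from the periodic extension of the splitting used to define $\Gamma_\lambda[\vf]$) that contain the endpoints of~$J$, and let $I_1,\dots,I_m$ with $m\ge 0$ be the partition intervals lying entirely inside~$J$. On each $I_i$ the function~$\Gamma_\lambda[\vf]_{\per}$ is an affine rescaling of $\vf_{\per}|_{[-1/2,1/2]}$, and on the boundary pieces $J\cap I_L$, $J\cap I_R$ it is an affine rescaling of a sub-interval of it. This yields
\[
\mu_{\Gamma_\lambda[\vf]_{\per}|_J} = \frac{1}{|J|}\Big(M_L\,\mu_{\vf_{\per}|_{J_L'}} + \sum_{i=1}^{m}|I_i|\,\mu_\vf + M_R\,\mu_{\vf_{\per}|_{J_R'}}\Big),
\]
where $M_L=|J\cap I_L|$, $M_R=|J\cap I_R|$, and $J_L',J_R'\subset[-1/2,1/2]$ are the unwrapped images of the two boundary pieces. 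If $J$ is contained in a single partition interval, the displayed measure itself belongs to~$K$ and there is nothing to prove.

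\medskip
Otherwise I approximate it by a single element $\mu_{\vf_{\per}|_{J^*}}\in K$: take $J^*$ to be the concatenation of~$J_L'$, then $n$ full periods of~$\vf_{\per}$, then~$J_R'$, glued across period boundaries. The resulting measure is a convex combination of the three measures from the decomposition above, with weights that would exactly match those of $\mu_{\Gamma_\lambda[\vf]_{\per}|_J}$ if all sizes $|I_L|,|I_1|,\ldots,|I_m|,|I_R|$ were equal. For a threshold~$M_1$ to be chosen, if $m\le M_1$ I pick $n$ to be the integer nearest to the ratio that matches the full-interval weight; the bounded-fraction property~\eqref{BoundedFraction} ensures the involved sizes differ by at most a factor~$\lambda^{-(M_1+1)}$, and a direct computation of the weight discrepancy gives a total variation error of order $M_1(1-\lambda)$.

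\medskip
The main obstacle is the complementary case $m>M_1$: here $J$ straddles many full partition intervals and the unwrapped approximation becomes crude, but one expects $\mu_{\Gamma_\lambda[\vf]_{\per}|_J}$ to be close to $\mu_\vf\in K$ instead. It then suffices to show that the boundary weight $(M_L+M_R)/|J|$ is small. The key input is the explicit geometric structure $|I_{k,\pm}|=\lambda^{k-1}(1-\lambda)/2$, combined with the fact that any sufficiently long run of consecutive partition intervals must repeatedly cross period centers, where the two largest intervals of length $(1-\lambda)/2$ occur. A direct summation yields $(M_L+M_R)/|J|=O\!\big(\max(1/m,\,1-\lambda)\big)$, hence a total variation error of order $\max(1/M_1,1-\lambda)$. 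Taking $M_1\sim 1/\varepsilon$ and then~$\lambda_0$ so close to~$1$ that $M_1(1-\lambda_0)<\varepsilon$, both error estimates stay below~$\varepsilon$, finishing the proof.
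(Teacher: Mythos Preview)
Your approach is essentially the paper's: the same long/short dichotomy according to the number~$m$ of fully covered partition intervals, the same approximation target~$\mu_\vf$ in the long case, and the same ``unwrapped'' interval~$J^*$ (the paper calls it~$\tilde J$) in the short case. The choice of~$M_1$ first and~$\lambda_0$ afterwards also mirrors the paper's choice of~$r$ then~$\lambda$.

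One point needs correction. In the long case you write that ``any sufficiently long run of consecutive partition intervals must repeatedly cross period centers, where the two largest intervals of length~$(1-\lambda)/2$ occur.'' This is false: the run~$I_{2,+},I_{3,+},\dots,I_{m+1,+}$ consists of~$m$ consecutive partition intervals entirely contained in~$\big(\tfrac{1-\lambda}{2},\tfrac12\big)$ and meets no period center, for any~$m$ and any~$\lambda$. So the geometric picture you invoke does not hold. Fortunately the conclusion~$(M_L+M_R)/|J|=O\big(\max(1/m,1-\lambda)\big)$ is still correct and follows directly from the bounded-ratio property~\eqref{BoundedFraction} alone (this is exactly how the paper argues): if~$I'_1,\dots,I'_m$ are the full intervals with~$I'_1$ adjacent to~$I_L$, then~$|I'_j|\ge\lambda^{j-1}|I'_1|$ and~$|I_L|\le\lambda^{-1}|I'_1|$, and symmetrically from the right, yielding
\[
\frac{M_L+M_R}{|J|}\le\frac{2\lambda^{-1}}{\sum_{j=0}^{m-1}\lambda^j}=\frac{2(1-\lambda)}{\lambda(1-\lambda^m)},
\]
which is~$O(1/m)$ when~$m(1-\lambda)\le 1$ and~$O(1-\lambda)$ otherwise. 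Replace the period-center heuristic with this computation and the argument is complete.
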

\begin{proof}
We choose a compact set~$K \subset \Om$, which contains all the points~$\mu_{\vf_{\per}|_J}$, where~$J\subset \mathbb{R}$ runs through all the intervals. Since~$\vf$ is simple,~$K$ may be chosen finite dimensional, that is~$K \subset V$. Here~$V$ is the finite dimensional subspace of~$\M(Y)$ spanned by~$\delta_y$, where~$y$ runs through all possible values of~$\vf$. We fix the Euclidean distance  in~$V$. Since the function $\Gamma_{\lambda}[\vf]$ attains the same values as $\vf$, it suffices to show that given $\delta>0$, one can take $\lambda$ so close to 1 that 
\begin{equation}\label{Needed}
\dist(\mu_{\Gamma_{\lambda}[\vf]|_J},K) < \delta
%\dist(\mu_{\Gamma_{\lambda}[\vf]|_J},\cre{V\setminus \Om}) > \delta
\end{equation}
for any interval~$J \subset \mathbb{R}$. 

We will consider long and short intervals~$J$ separately. Pick~$r$ to be a positive number to be chosen later. We say that an interval~$J$ is~$r$-long if it covers at least~$r$ intervals~$I_{k,\pm}$ entirely (recall that the partition~$\{I_{k,\pm}\}$ is extended periodically to the whole line). If the interval~$J$ is not~$r$-long, we call it~$r$-short.

  %Let~$x = \av{\vf}{[-\frac12,\frac12]}$.

\paragraph{The case of~$r$-long intervals.}  Let the endpoints of~$J = [j_1,j_2]$ lie inside~$I_{k,\pm}$ and~$I_{l,\pm}$ correspondingly. Let~$\tilde{J} = J \setminus (I_{k,\pm} \cup I_{l,\pm})$. Then,
\begin{equation}\label{ConvComb}
\mu_{\Gamma_{\lambda}[\vf]|_J} = \frac{|\tilde{J}|}{|J|}\mu_{\vf} + \frac{|J\cap I_{k,\pm}|}{|J|}\mu_{\Gamma_{\lambda}[\vf]|_{J\cap I_{k,\pm}}} + \frac{|J\cap I_{l,\pm}|}{|J|}\mu_{\Gamma_{\lambda}[\vf]|_{J\cap I_{l,\pm}}}.
\end{equation}
By~\eqref{BoundedFraction},
\begin{equation*}
\frac{|J\cap I_{k,\pm}|}{|J|} +\frac{|J\cap I_{l,\pm}|}{|J|} \leq \frac{2}{1 + \sum\limits_{j=1}^{r-1}\lambda^{j}}.
\end{equation*}
Thus, equation~\eqref{ConvComb} leads to
\begin{equation}\label{Conv}
\mu_{\Gamma_{\lambda}[\vf]|_J} = \alpha_+\mu_\vf + \alpha_- y, \quad  \alpha_+ + \alpha_- = 1, \alpha_{\pm} \geq 0,
\end{equation}
where~$y$ is a point in the convex hull of~$K$ and~$\alpha_- \leq \frac{2-2\lambda}{1 - \lambda^{r}}$. The latter quantity tends to~$2/r$ when~$\lambda \to 1$. In particular,
\begin{equation}\label{FirstEstimate}
\|\mu_{\Gamma_{\lambda}[\vf]|_J} -  \mu_{\vf}\| \leq \frac{4}{r}\diam K,
\end{equation}
provided~$\lambda$ is sufficiently close to~$1$.

\paragraph{The case of~$r$-short interval.} Let~$J = [j_1,j_2]$ be an~$r$-short interval. Let it intersect~$s < r+2$ intervals of the partition~$I_{k,\pm}$ (let the intervals intersected by~$J$ be~$\{I_{k_j}\}_{j=1}^{s}$, we assume the $k_j$ to be consecutive numbers), the leftmost and rightmost intervals might be covered only partially. We represent the distribution over~$J$ as a convex combination of distributions over the partition intervals:
\begin{equation}\label{ConvComb2}
\mu_{\Gamma_{\lambda}[\vf]|_J} = \sum\limits_{j=1}^{s} \frac{|I_{k_j} \cap J|}{|J|}\mu_{\Gamma_{\lambda}[\vf]|_{I_{k_j}\cap J}}.
\end{equation}
We linearly map each of the intervals~$I_{k_j}$ onto the interval~$[k_j-\frac12,k_j+\frac12]$. Let the images of~$j_1$ and~$j_2$ be~$\tilde{j}_1$ and~$\tilde{j}_2$ and let~$\tilde{J} = [\tilde{j}_1,\tilde{j}_2]$. Then,
\begin{equation}\label{Rescaled}
\mu_{\vf_{\per}|_{\tilde{J}}} = \sum\limits_{j=1}^{s} \frac{\big|[k_j - \frac12,k_j+\frac12] \cap \tilde{J}\big|}{|\tilde{J}|}\mu_{\vf|_{[k_j - \frac12,k_j+\frac12] \cap \tilde{J}}} \stackrel{\scriptscriptstyle \eqref{Rescaling}}{=} \sum\limits_{j=1}^{s} \frac{\big|[k_j - \frac12,k_j+\frac12] \cap \tilde{J}\big|}{|\tilde{J}|}\mu_{\Gamma_{\lambda}[\vf]|_{I_{k_j}\cap J}}.
\end{equation}
Note that~\eqref{BoundedFraction} leads to
\begin{equation*}
\Big|\frac{|I_{k_j} \cap J|}{|J|} - \frac{\big|[k_j - \frac12,k_j+\frac12] \cap \tilde{J}\big|}{|\tilde{J}|}\Big| \leq \lambda^{-r-1}-1,
\end{equation*}
so, subtracting~\eqref{Rescaled} from~\eqref{ConvComb2}, we get
\begin{equation*}
\Big\|\mu_{\Gamma_{\lambda}[\vf]|_J} - \mu_{\vf_{\per}|_{\tilde{J}}}\Big\| \leq (r+2)(\lambda^{-r-1}-1)\diam K.
\end{equation*}
In particular,
\begin{equation}\label{SecondEstimate}
\dist(\mu_{\Gamma_{\lambda}[\vf]|_J}, K) \leq (r+2)(\lambda^{-r-1}-1)\diam K.
\end{equation}

\paragraph{The choice of~$r$ and~$\lambda$.} We fix~$r$ to be so large that~\eqref{FirstEstimate} leads to~\eqref{Needed} for~$\lambda$ sufficiently close to~$1$. After that we choose~$\lambda$ to be sufficiently close to one in such a manner that~\eqref{SecondEstimate} implies~\eqref{Needed}.
\end{proof}

\paragraph{End of proof of Lemma~\ref{Gluing}.} Consider~$\vf_\alpha$ given by the formula
\begin{equation*}
\vf_\alpha(x) = \begin{cases}
\Big(\Gamma_{\lambda}[\vf_1]\Big|_{[-\frac12,\frac12]}\Big)_{[0,\alpha]}(x) \quad &x \in [0,\alpha); \\
\Big(\Gamma_{\lambda}[\vf_0]\Big|_{[-\frac12,\frac12]}\Big)_{[\alpha,1]}(x) \quad &x\in [\alpha,1),
\end{cases}
\end{equation*}
and extend it periodically. Since~$\lambda$-homogenization preserves the distribution,~$\vf_\alpha$ satisfies~\eqref{GluingIdentity}. Similar to the proof of Lemma~\ref{OneFunction}, it suffices to show proper analogs of~\eqref{FirstEstimate} and~\eqref{SecondEstimate} for any interval~$J \subset \mathbb{R}$. The reasoning described in the proof of Lemma~\ref{OneFunction} works verbatim. The case of~$r$-short intervals does not differ at all (since we average only one of the functions~$\Gamma_{\lambda}[\vf_0]$ or~$\Gamma_{\lambda}[\vf_1]$ over a short interval). As for~$r$-long intervals, the only difference is that inside the intervals completely covered by~$J$ there might occur both functions~$\Gamma_{\lambda}[\vf_0]$ and~$\Gamma_{\lambda}[\vf_1]$. Therefore, the estimate~\eqref{FirstEstimate} is true with~$\mu_{\vf}$ replaced by a point inside~$[\mu_{\vf_0},\mu_{\vf_1}]$. \qed

\begin{proof}[Proof of Theorem~\ref{Main}]
With Lemma~\ref{Gluing} at hand, we proceed by induction with respect to $n_0$ (the time at which the given martingale reaches~$\Delta$). The base $n_0=0$ is trivial. To make the induction step, we use the induction hypothesis for restriction of the martingale $M$ to each atom of $S_1$ and construct the corresponding functions from $\ClassCs$. Consecutive application of Lemma~\ref{Gluing} allows to glue the desired function $\vf \in \ClassCs$ with~$\mu_{\vf} = M_0$ from these pieces.
\end{proof}

\section{Proofs of Theorems~\ref{LpNormsTheorem},~\ref{WeakTypeTheorem}, and~\ref{MuckenhouptTheorem}}\label{S3}

We need to introduce some terminology from~\cite{SZ}. 
%\subsection{Minimal locally concave functions}

%\cre{EST PREDLOGENIE PEREIMENOVAT OBLASTI. $\Xi_0$, $\Xi_1$, ... - for open sets and $\Omega_\eps = \cl \Xi_0 \setminus \Xi_\eps$ - for strips.}

%Here we give some definitions and results from (and in the style of)~\cite{SZ}.}

Let~$\Xi_0$ be a non-empty open convex subset of~$\mathbb{R}^2$ that does not contain lines. Let~$\Xi_1$ be another open convex subset of~$\mathbb{R}^2$ such that~$\cl \Xi_1 \subset \Xi_0$. We define the set~$\Omega$ as~$\cl (\Xi_0 \setminus  \Xi_1)$ and the class~$\Class = \Class(\Omega)$ of integrable~$\mathbb{R}^2$-valued functions on an interval~$I \subset \mathbb{R}$:
\begin{equation}\label{AnalyticClass}
\Class = \big\{\vf \in L_1(I,\mathbb{R}^2)\,\big|\,\, \vf(I) \subset \partial \Xi_0, \,\,\forall J \hbox{---subinterval of}\,\,I\quad \av{\vf}{J} \notin \Xi_1\big\}.
\end{equation}
Let~$f$ be a bounded from below Borel measurable locally bounded function on~$\partial \Xi_0$. Consider the Bellman function~$\Bell = \Bell(\Omega,f)$:
\begin{equation}\label{BellmanFunction}
\Bell(x) = \sup\big\{\av{f(\vf)}{I}\,\big|\,\, \av{\vf}{I} = x,\,\,\vf \in \Class\big\}, \quad x \in \Omega.
\end{equation}
\begin{Th}[Main Theorem in~\cite{SZ}]\label{OldMain}
Assume 
\begin{enumerate}[1\textup)]
\item the sets~$\Xi_1$ and~$\Xi_0$ are strictly convex\textup;
\item the boundary of~$\Xi_1$ is~$C^2$-smooth\textup;
\item the maximal inscribed cones of~$\Xi_1$ and~$\Xi_0$ coincide.
\end{enumerate}
Then\textup,~$\Bell$ is the pointwise minimal among locally concave on~$\Omega$ functions~$G$ that satisfy~$G(x) \geq f(x)$\textup,~$x\in \partial \Xi_0$.
\end{Th}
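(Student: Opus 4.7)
The plan is to prove the two inclusions separately: first, that any locally concave majorant $G$ of $f|_{\partial \Xi_0}$ dominates $\Bell$ pointwise, and second, that $\Bell$ itself is locally concave on $\Omega$ and satisfies $\Bell(x)\ge f(x)$ on $\partial \Xi_0$.

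For the upper bound $\Bell \leq G$, I would fix $x\in\Omega$ and any test function $\vf\in\Class$ with $\av{\vf}{I}=x$. The idea is a dyadic martingale argument: split $I$ into halves $I^\pm$, observe that $x = \tfrac12(\av{\vf}{I^+} + \av{\vf}{I^-})$ and that both averages lie in $\Omega$ because $\vf\in\Class$. If the chord between these two averages lay entirely in $\Omega$, local concavity of $G$ would immediately yield $G(x)\ge\tfrac12(G(\av{\vf}{I^+})+G(\av{\vf}{I^-}))$, and iterated splitting plus Lebesgue differentiation would give $G(x)\geq \av{f(\vf)}{I}$. The subtlety is that the chord may cross $\interior\Xi_1$. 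Here one uses the continuity of $J\mapsto \av{\vf}{J}$: moving an endpoint of a subinterval continuously yields a continuous curve of averages, which by the condition $\av{\vf}{J}\notin\Xi_1$ stays in $\Omega$; this gives a polygonal path inside $\Omega$ from $\av{\vf}{I^-}$ to $\av{\vf}{I^+}$ passing through $x$, along which local concavity of $G$ can be chained.

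For the lower bound, $\Bell(x)\geq f(x)$ on $\partial\Xi_0$ follows by testing the constant function $\vf\equiv x$, which trivially lies in $\Class$. The essential content is local concavity of $\Bell$: given a chord $[y_0,y_1]\subset\Omega$, near-optimizers $\vf_0,\vf_1\in\Class$ for $\Bell(y_0),\Bell(y_1)$, and a weight $\alpha\in(0,1)$, one must build $\vf_\alpha\in\Class$ on $I$ with $\av{\vf_\alpha}{I} = (1-\alpha)y_0 + \alpha y_1$ and $\av{f(\vf_\alpha)}{I}$ close to $(1-\alpha)\av{f(\vf_0)}{I} + \alpha \av{f(\vf_1)}{I}$. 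The naive concatenation on sub-intervals of proportion $1-\alpha$ and $\alpha$ has the right average, but averages over intervals straddling the junction can lie off the chord and enter $\Xi_1$. The fix is a homogenization/shuffling construction — essentially the one formalized as Lemma~\ref{Gluing} in the present paper — which rescales and finely interlaces the two patterns so that every subinterval average is forced arbitrarily close to the chord $[y_0,y_1]\subset\Omega$.

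The main obstacle is precisely the constraint that intermediate subinterval averages must avoid $\Xi_1$; without strict convexity of $\Xi_0,\Xi_1$ and $C^2$ smoothness of $\partial\Xi_1$, the chord manipulations above could hit the boundary $\partial\Xi_1$ in a degenerate way and one would lose control of the averages near the tangent directions. The matching-inscribed-cones hypothesis (3) is what allows the arguments to extend to unbounded $\Omega$: it keeps the directions along which the domain ``escapes to infinity'' consistent between the inner and outer obstacles, so that the polygonal paths built in the upper bound, and the interlacing in the lower bound, do not produce escaping subsequences of averages whose limiting behavior falls outside the admissible class. Once these geometric ingredients are in place, the two bounds combine to identify $\Bell$ with the pointwise minimal locally concave majorant of $f|_{\partial\Xi_0}$.
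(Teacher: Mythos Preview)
The present paper does not prove this theorem; it is quoted as the Main Theorem of~\cite{SZ} and used as a black box (see the proof of Lemma~\ref{AntiBellman}, which invokes it directly). So there is no proof here to compare against.

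Your two-inequality strategy is the right skeleton, and the upper-bound idea via the continuous delivery curves $J\mapsto\av{\vf}{J}$ is essentially how the inequality $\Bell\le G$ is obtained in~\cite{SZ}. Two things in your lower-bound sketch are off, however. First, appealing to ``essentially the construction formalized as Lemma~\ref{Gluing}'' is backwards: that lemma concerns functions on the \emph{circle} and is one of the new contributions of \emph{this} paper, not a tool from~\cite{SZ}. The interval case in~\cite{SZ} does not need periodic homogenization; the difficulty there is different. Second, and more seriously, your account of hypotheses (1)--(3) is decorative rather than operative. Nothing in the gluing argument you describe actually uses strict convexity of $\Xi_0,\Xi_1$, the $C^2$-smoothness of $\partial\Xi_1$, or the matching inscribed cones; if your argument worked as written it would prove the theorem without them. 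In~\cite{SZ} these hypotheses are used in an explicit geometric construction of the minimal locally concave majorant --- a foliation of $\Omega$ by extremal chords and ``cups'' along which optimizers are built --- and that is where smoothness and strict convexity do real work. Your sketch bypasses this machinery without supplying a substitute, so as it stands it is an outline of the easy half plus a hope for the hard half.
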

By the maximal inscribed cone of a convex set $\Xi_j$, we mean the maximal by inclusion convex cone contained in $\Xi_j - \Xi_j$, $j=0,1$. We call a function~$G\colon \Omega \to \mathbb{R}\cup \{\pm\infty\}$ locally concave provided its restriction to any convex subset of~$\Omega$ is concave. In the following lemma we use Definition~\ref{Mart} with $\Dell:= \partial \Xi_0$, $\Omm := \Omega$ and $V:=\mathbb{R}^2$.
%Under some mild technical assumptions (both sets~$\Omega_0$ and~$\Omega_1$ are strictly convex, the boundary of~$\Omega_1$ is~$C^2$-smooth, and the maximal inscribed cones of~$\Omega_0$ and~$\Omega_1$ coincide), it was proved in~\cite{SZ} that~$\Bell_{\Omega,f}$ is the minimal among locally concave on~$\Omega$ functions~$G$ that satisfy the boundary condition~$G(x) \geq f(x)$,~$x\in \partial \Omega_0$.
\begin{Le}\label{AntiBellman}
For any~$x\in \Omega$ and any~$\theta > 0$\textup, there exists a simple~$(\Omega,\partial \Xi_0)$-martingale~$M$ such that
\begin{equation}\label{OptimalMartingale}
\E f(M_{\infty}) + \theta \geq \Bell(x), \quad M_0 = x.
\end{equation}
\end{Le}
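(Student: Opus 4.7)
The plan is to define the ``dual'' functional
\begin{equation*}
\tilde G(x) := \sup\bigl\{\E f(M_\infty)\,\bigm|\, M \text{ is a simple } (\Omega,\partial\Xi_0)\text{-martingale with } M_0 = x\bigr\},\qquad x \in \Omega,
\end{equation*}
and to prove that $\tilde G \geq \Bell$ on $\Omega$. The conclusion of the lemma then follows immediately, for the defining supremum supplies, for every $\theta > 0$, a simple $(\Omega,\partial\Xi_0)$-martingale $M$ with $M_0 = x$ and $\E f(M_\infty) + \theta \geq \tilde G(x) \geq \Bell(x)$.

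To show $\tilde G \geq \Bell$ I invoke the minimality part of Theorem~\ref{OldMain}: it is enough to verify that $\tilde G$ is locally concave on $\Omega$ and that $\tilde G(x) \geq f(x)$ for every $x \in \partial \Xi_0$. The boundary condition is immediate, for if $x \in \partial \Xi_0 = \Dell$ the constant trajectory $M_n \equiv x$ (with $n_0 = 0$) is a valid simple $(\Omega,\partial\Xi_0)$-martingale and gives $\tilde G(x) \geq f(x)$.

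Local concavity is verified by gluing martingales at their root. Take $x_0, x_1 \in \Omega$ with $[x_0, x_1]\subset \Omega$ and $\alpha \in [0,1]$, set $x := (1-\alpha)x_0 + \alpha x_1$, and let $M^{(0)}, M^{(1)}$ be simple $(\Omega,\partial\Xi_0)$-martingales living on probability spaces $(U_0, \mathbb{P}_0)$ and $(U_1, \mathbb{P}_1)$, starting at $x_0$ and $x_1$ respectively. On the disjoint union $U_0 \sqcup U_1$, equipped with the probability measure $(1-\alpha)\mathbb{P}_0 + \alpha \mathbb{P}_1$, introduce the filtration with $\hat S_0$ trivial, $\hat S_1 := \{\emptyset, U_0, U_1, U_0\sqcup U_1\}$, and for $n\geq 1$ let $\hat S_{n+1}$ restrict on the atom $U_i$ to the $n$-th algebra of $M^{(i)}$. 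Set $\hat M_0 := x$ and $\hat M_{n+1}|_{U_i} := M^{(i)}_n$. The martingale identity at step zero is exactly $x = (1-\alpha)x_0 + \alpha x_1$; the first convex-hull condition of Definition~\ref{Mart} at $n=0$ is the hypothesis $[x_0,x_1]\subset\Omega$, and at later steps it reduces to the corresponding condition for $M^{(0)}$ and $M^{(1)}$. Since each $M^{(i)}_\infty$ lands in $\partial\Xi_0$, so does $\hat M_\infty$, and
\begin{equation*}
\E f(\hat M_\infty) = (1-\alpha)\,\E f(M^{(0)}_\infty) + \alpha\,\E f(M^{(1)}_\infty).
\end{equation*}
Passing to the supremum over $M^{(0)}, M^{(1)}$ delivers $\tilde G(x) \geq (1-\alpha)\tilde G(x_0) + \alpha \tilde G(x_1)$, i.e.\ local concavity. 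The only obstacle I foresee is notational: packaging the two component martingales into a single filtered probability space while verifying each clause of Definition~\ref{Mart}. Everything else is a direct invocation of the minimality property of $\Bell$ from Theorem~\ref{OldMain}.
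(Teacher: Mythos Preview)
Your proposal is correct and follows essentially the same route as the paper: define the martingale-supremum functional, verify that it is locally concave on~$\Omega$ and dominates~$f$ on~$\partial\Xi_0$, and then invoke the minimality characterization of~$\Bell$ from Theorem~\ref{OldMain}. The only differences are cosmetic: you spell out the gluing construction for local concavity explicitly, whereas the paper defers that to Lemma~2.17 of~\cite{SZ}; and the paper goes on to prove the reverse inequality (hence $\tilde G=\Bell$), which is not needed for the lemma's conclusion.
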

\begin{proof}
Consider the function
\begin{equation*}
B(x) = \sup\Big\{\E f(M_{\infty})\;\Big|\, M\hbox{ is a simple ($\Omega,\partial \Xi_0$)-martingale, } M_0 = x\Big\}, \quad x \in \Omega.
\end{equation*}
This function is locally concave on~$\Omega$ (see the proof of Lemma~$2.17$ in~\cite{SZ}) and $B(x) = f(x)$ for $x \in \partial \Xi_0$; note that $B \ne -\infty$. On the other hand, for any locally concave function~$G$, the sequence~$\E G(M_n)$ is non-increasing with~$n$ (see Lemma~$2.10$ in~\cite{SZ}), so
\begin{equation*}
B(x) \leq G(x),\quad \text{provided $G$ is locally concave on~$\Omega$ and } G(x) \geq f(x), \ x\in\partial \Xi_0.
\end{equation*}
So~$B$ coincides with~$\Bell$ by Theorem~\ref{OldMain}. The existence of the desired martingale~$M$ is now guaranteed by the definition of~$B$.
\end{proof}

Let now~$\hat{\Xi}$ be a set that satisfies the same assumptions as~$\Xi_1$, but lies strictly inside the former set:~$\cl\hat{\Xi} \subset \Xi_1$. %Let~$\tilde{\Omega} = \cl\Omega_0\setminus \cre{\hat{\Omega}_1}$. 
Consider the class~$\ClassCs(\partial\Xi_0, \hat\Om(\Xi_0,\hat\Xi))$ (see  Definition~\ref{ClassCircle}) generated by 
\begin{equation}\label{Star}
\hat{\Om}(\Xi_0,\hat\Xi) = \Big\{\mu \in \Mpr(\partial\Xi_0)\;\Big|\,\int\limits_{\partial \Xi_0} xd\mu(x) \notin \cl\hat{\Xi}\Big\}.
\end{equation}
%in the role of~$\Om_1$ according to Definition~\ref{ClassCircle}.
\begin{Le}\label{Transference}
For any~$\Omega$\textup, any~$\hat{\Xi}$\textup, any~$x\in \Omega$\textup, and any $\theta>0$\textup, there exists~$\vf \in \ClassCs(\partial \Xi_0, \hat{\Om}(\Xi_0,\hat\Xi))$ such that~$\av{\vf}{\mathbb{T}} = x$ and
\begin{equation*}
\av{f(\vf)}{\mathbb{T}} + \theta \geq \Bell(x).
\end{equation*}
\end{Le}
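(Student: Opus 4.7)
The plan is to assemble the lemma from the two machines that have already been built: Lemma~\ref{AntiBellman}, which delivers a near-optimal $\mathbb{R}^2$-valued martingale realizing $\Bell(x)$, and Theorem~\ref{Main}, which turns a measure-valued martingale into an element of $\ClassCs$. The only nontrivial step is to bridge them by lifting the $\mathbb{R}^2$-valued martingale to a martingale valued in probability measures on $\partial \Xi_0$.

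More concretely, I would fix $x \in \Omega$ and $\theta > 0$ and begin by invoking Lemma~\ref{AntiBellman} to obtain a simple $(\Omega,\partial\Xi_0)$-martingale $M = \{M_n\}$ on some filtration $\{S_n\}$ with $M_0 = x$ and $\E f(M_\infty) + \theta \geq \Bell(x)$. Since $M$ is simple, $M_\infty$ attains only finitely many values $y_1,\ldots,y_N \in \partial\Xi_0$. I then define
\begin{equation*}
\tilde M_n(\omega) = \text{conditional distribution of } M_\infty \text{ given } S_n \text{ at } \omega = \E(\delta_{M_\infty} \mid S_n)(\omega),
\end{equation*}
which is a finite convex combination of the delta measures $\delta_{y_j}$ and therefore lies in $\Mpr(\partial\Xi_0)$. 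By construction $\{\tilde M_n\}$ is a martingale in $\M(\partial\Xi_0)$ adapted to $S$, stabilizing at the same time $n_0$ as $M$, with $\tilde M_{n_0} = \delta_{M_{n_0}} \in \Delta(\partial\Xi_0)$.

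The key verification is that $\tilde M$ is a simple $(\hat\Om(\Xi_0,\hat\Xi), \Delta(\partial\Xi_0))$-martingale in the sense of Definition~\ref{Mart}. The barycenter map $\mu \mapsto \int y\,d\mu(y)$ is affine and sends $\tilde M_n(\omega)$ to $M_n(\omega) \in \Omega$. Since $\Xi_1$ is open, $\Omega = \cl(\Xi_0 \setminus \Xi_1) \subset \cl\Xi_0 \setminus \Xi_1$, and $\cl\hat\Xi \subset \Xi_1$ by hypothesis, so the barycenter avoids $\cl\hat\Xi$ and hence $\tilde M_n(\omega) \in \hat\Om$. For the convex-hull condition, note that for any atom $\omega \in S_n$, the convex hull of $\{\tilde M_{n+1}(\tilde\omega)\}_{\tilde\omega \in \omega}$ is sent by the barycenter map onto the convex hull of $\{M_{n+1}(\tilde\omega)\}_{\tilde\omega \in \omega}$, which lies in $\Omega$ because $M$ is an $(\Omega,\partial\Xi_0)$-martingale; invoking again $\Omega \cap \cl\hat\Xi = \emptyset$, this convex hull sits inside $\hat\Om$.

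Now apply Theorem~\ref{Main} to $\tilde M$ (with $Y = \partial\Xi_0$ and $\Om = \hat\Om$) to produce $\vf \in \ClassCs(\partial\Xi_0, \hat\Om)$ with $\mu_\vf = \tilde M_0$. Unwinding definitions, $\av{\vf}{\mathbb{T}} = \int y\,d\mu_\vf(y)$ equals the barycenter of $\tilde M_0$, namely $M_0 = x$, and
\begin{equation*}
\av{f(\vf)}{\mathbb{T}} = \int_{\partial\Xi_0} f(y)\,d\mu_\vf(y) = \int_{\partial\Xi_0} f\,d\tilde M_0 = \E f(M_\infty) \geq \Bell(x) - \theta,
\end{equation*}
which is the desired inequality. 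The step I expect to require the most care is the convex-hull verification, because the martingale condition in Definition~\ref{Mart} is stated in the ambient linear space $V = \M(\partial\Xi_0)$, and one must argue that membership in $\hat\Om$ (a condition on the barycenter only) is preserved under the lift from $\mathbb{R}^2$-valued to measure-valued convex combinations; the remaining assertions are formal consequences of the construction.
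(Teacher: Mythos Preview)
Your proof is correct and follows essentially the same approach as the paper. Your lifted martingale $\tilde M_n = \E(\delta_{M_\infty}\mid S_n)$ is exactly the paper's $\mathbb{M}_n$ (characterized there as the unique $\Mpr(\partial\Xi_0)$-valued martingale whose barycenter is $M_n$), and you then apply Theorem~\ref{Main} just as the paper does; if anything, you supply more detail on verifying the $(\hat\Om,\Delta)$-martingale condition than the paper, which simply declares it ``easy to see.''
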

\begin{proof}
Let~$M$ be a simple~$(\Omega,\partial \Xi_0)$-martingale such that~\eqref{OptimalMartingale} holds true, such a martingale exists by Lemma~\ref{AntiBellman}. There exists a unique~$\Mpr(\partial \Xi_0)$-valued martingale~$\mathbb{M}$ over the same filtration such that 
\begin{equation*}
\int\limits_{\partial \Xi_0} x d\mathbb{M}_n(x) = M_n \quad \hbox{for any $n$}.
\end{equation*}
It is easy to see that~$\mathbb{M}$ is a simple~$(\hat{\Om},\Delta)$-martingale and
\begin{equation*}
\int\limits_{\partial \Xi_0} f(x) \,d\mathbb{M}_0(x) = \E f(M_{\infty}).
\end{equation*}
It remains to apply Theorem~\ref{Main} for~$\mathbb{M}$ and construct the desired function $\vf$.
\end{proof}

Consider the class~$\ClassC (\mathbb{R}, \Om^{p,\eps})$ generated by the set
\begin{equation}\label{BMOpClass}
\Om^{p,\eps} = \Big\{\mu \in \Mpr(\mathbb{R})\;\Big|\,\int\limits_\mathbb{R}\Big|t - \int\limits_\mathbb{R}\tau\,d\mu(\tau)\Big|^p\,d\mu(t) < \eps^p \Big\}.% \hbox{ or } \int\limits_\mathbb{R}t^p\,d\mu(t) = \infty\Big\}.
\end{equation}
%in the role of~$\Om_1$ as described in Definition~\ref{ClassCircle}. 
This class is closely related to the~$\BMO_p$-norm:
\begin{align}\label{SimpleEmbeddings}
\|\vf\|_{p,\mathbb{T}} < \eps \quad \Rightarrow \quad \vf \in \ClassC(\mathbb{R}, \Om^{p,\eps});\\
\|\vf\|_{p,\mathbb{T}} \leq \eps \quad \Leftarrow \quad \vf \in \ClassC(\mathbb{R}, \Om^{p,\eps}). \label{eq150401}
\end{align}
The case~$p=2$ is special. The class~$\ClassC(\mathbb{R}, \Om^{2,\eps})$ admits an alternative definition in the style of~\eqref{Star}:
\begin{equation}\label{eq210701}
\vf  \in \ClassC(\mathbb{R}, \Om^{2,\eps}) \Longleftrightarrow (\vf,\vf^2) \in \ClassC(\partial\Xi_0, \hat\Om(\Xi_0,\Xi_\eps)),
\end{equation}
where $\Xi_0 = \{(x_1,x_2)\in \mathbb{R}^2\mid x_2>x_1^2\}$ and $\Xi_\eps = \{(x_1,x_2)\in \mathbb{R}^2\mid x_2>x_1^2+\eps^2\}$.

%\cre{\begin{equation*}
%\Om^{2,\eps}_1 = \Big\{\mu \in \Mpr(\mathbb{R})\mid \mathrm{P}\mu \in \hat{\Om}_1\Big\},
%\end{equation*}
%where $\hat{\Om}_1$ is constructed by $\Omega_0 = \{x \in \mathbb{R}^2\mid x_2>x_1^2\}$, $\hat\Omega_1 = \{x \in \mathbb{R}^2\mid x_2>x_1^2+\eps^2\}$, and the bijection $\mathrm{P}\colon \Mpr(\mathbb{R})\to \Mpr(\partial \Omega_0)$ is generated by the map $x_1\mapsto(x_1,x_1^2)$.
%}

%\begin{equation*}
%\Om^{2,\eps}_1 = \Big\{\mu \in \Mpr\big(\{(x_1,x_1^2)\mid x_1\in\mathbb{R}\}\big)\;\Big|\,\int x\,d\mu(x) \notin \{x\in\mathbb{R}^2\mid x_1^2 + \eps^2 \leq x_2\}\Big\}.
%\end{equation*}

\begin{proof}[Proof of Theorem~\ref{LpNormsTheorem}.]
By Remark~\ref{TwoConstants}, it suffices to construct a function~$\vf_1 \colon \mathbb{T} \to\mathbb{R}$ such that
\begin{equation*}
\av{\vf_1}{\mathbb{T}} = 0,\quad \av{|\vf_1|^p}{\mathbb{T}} +\theta \geq \frac{p}{2}\Gamma(p), \quad \hbox{and}\quad \|\vf_1\|_{2,\mathbb{T}} \leq 1+\delta,
\end{equation*}
given any positive~$\theta,\delta$, and $p > 2$. We choose
\begin{equation*}
\begin{split}
\Xi_0 =\{(x_1,x_2)\in\mathbb{R}^2\mid x_2 > x_1^2\},&\qquad
\Xi_1 = \{(x_1,x_2)\in\mathbb{R}^2\mid x_2 > x_1^2+1\}, \\
f(x_1,x_1^2) &= |x_1|^p, \qquad x_1 \in \mathbb{R},
\end{split}
\end{equation*}
and consider the corresponding function~$\Bell$ defined by formula~\eqref{BellmanFunction} on $\Omega = \cl\Xi_0\setminus\Xi_1$. The exact formula for this function was computed in~\cite{SlavinVasyuninLp}. We need the value at a certain point only:
\begin{equation*}
\Bell(0,1) = \frac{p}{2}\Gamma(p).
\end{equation*}
We use~Lemma~\ref{Transference} with 
$$
\hat\Xi = \Xi_{1+\delta} = \{x\in\mathbb{R}^2\mid x_2 > x_1^2 + (1+\delta)^2\}
$$
and obtain a function $\vf=(\vf_1,\vf_1^2) \in \ClassCs(\partial \Xi_0, \hat{\Om}(\Xi_0,\Xi_{1+\delta}))$ with  
$$
\av{\vf}{\mathbb{T}} = (0,1), \quad
\av{|\vf_1|^p}{\mathbb{T}} + \theta \geq \frac{p}{2}\Gamma(p).
$$
In view of~\eqref{eq210701} and~\eqref{eq150401} with $\eps = 1+\delta$, we have~$\|\vf_1\|_{2,\mathbb{T}} \leq 1+\delta$.
\end{proof}

\begin{proof}[Proof of Theorem~\ref{WeakTypeTheorem}.]
By Remark~\ref{TwoConstants}, it suffices to prove the sharpness of the second and the third inequalities in~\eqref{JohnNirenbergQuadraticNorm} for functions on the line. We choose the same~$\Omega$ and $\hat\Xi$ as in the proof of Theorem~\ref{LpNormsTheorem} above and another~$f$:
\begin{equation*}
f(x_1,x_1^2) =\chi_{\scriptscriptstyle[\lambda,+\infty)}(|x_1|),\qquad x_1\in \mathbb{R}.
\end{equation*}
The exact formula for the corresponding Bellman function was computed in~\cite{VasyuninVolberg}. In particular,
\begin{equation*}
\Bell(0,1) = \begin{cases}
\frac{1}{\lambda^2} ,\quad \lambda \in [1,2];\\
\frac{e^2}{4}e^{-\lambda},\quad \lambda \geq 2.
\end{cases}
\end{equation*}
Similar to the previous proof, for any given positive $\delta,\theta$, Lemma~\ref{Transference} (with~\eqref{eq150401} and~\eqref{eq210701}) allows us to construct a function~$\vf_1\colon \mathbb{T} \to \mathbb{R}$ such that
\begin{equation*}
\av{\vf_1}{\mathbb{T}} = 0, \ \|\vf_1\|_{2,\mathbb{T}} \leq 1 + \delta, \quad \hbox{and} \quad \Big|\Big\{t\in\mathbb{T}\;\Big|\, |\vf_1(t)| \geq \lambda\Big\}\Big| +\theta\geq  \begin{cases}
\frac{1}{\lambda^2} ,\quad \lambda \in [1,2];\\
\frac{e^2}{4}e^{-\lambda},\quad \lambda \geq 2.
\end{cases}
\end{equation*}
\end{proof}
\begin{proof}[Proof of Theorem~\ref{MuckenhouptTheorem}.]
Consider the class~$\ClassC(\mathbb{R}_+, \Ommuc^{p,C})$ generated by the set
\begin{equation}\label{MuckenhouptGeneral}
\Ommuc^{p,C} = \Big\{\mu \in \Mpr(\mathbb{R}_+)\;\Big|\,\Big(\int\limits_{\mathbb{R}_+}t d\mu(t)\Big)\cdot\Big(\int\limits_{\mathbb{R}_+}t^{-\frac{1}{p-1}}\,d\mu(t)\Big)^{p-1} < C\Big\}
\end{equation}
(see Definition~\ref{ClassCircle}). This class is closely related to the~$A_p$-constant:
\begin{align}%\label{SimpleEmbeddings}
[w]_{A_p(\mathbb{T})} < C \quad \Rightarrow \quad w \in \ClassC(\mathbb{R}_+, \Ommuc^{p,C});\notag \\
[w]_{A_p(\mathbb{T})} \leq C \quad \Leftarrow \quad w \in \ClassC(\mathbb{R}_+, \Ommuc^{p,C}).\label{eq210703}
\end{align}

The class~$\ClassC(\mathbb{R}_+, \Ommuc^{p,C})$ might be described in the style of~\eqref{Star}:
\begin{equation}\label{eq210704}
w_1  \in \ClassC(\mathbb{R}_+, \Ommuc^{p,C}) \Longleftrightarrow (w_1,w_1^{-\frac{1}{p-1}}) \in \ClassC(\partial\Xi_0, \hat\Om(\Xi_0,\Xi_C)),
\end{equation}
where 
\begin{equation}\label{eq210705}
\begin{split}
\Xi_0 &= \{(x_1,x_2) \in \mathbb{R}_+^2\mid x_2>x_1^{-\frac{1}{p-1}}\},\\
\Xi_C &= \{(x_1,x_2) \in \mathbb{R}_+^2\mid x_2> C x_1^{-\frac{1}{p-1}}\}, \quad \text{ for } C>1.
\end{split}
\end{equation}

%We represent the set~\eqref{MuckenhouptGeneral} as

Let $R(C,q)$ be equal to the value of the right-hand side of~\eqref{eq210702}. It follows from~\cite{VasyuninMuckenhoupt} and~\cite{Vasyunin3} that 
$$
R(C,q) = \sup_{\tilde C < C} R(\tilde C,q).
$$ 
In order to prove~\eqref{eq210702} it suffices, for any given $\theta>0$ and $\tilde C<C$ to find a weight $w_1$ with 
$$
[w_1]_{A_p(\mathbb{\mathbb{T}})} \leq C,\qquad \frac{\av{w_1^q}{\mathbb{T}}^{\frac{1}{q}}}{\av{w_1}{\mathbb{T}}} + \theta \geq R(\tilde C,q).
$$

Fix any $\tilde C<C$ and take $\Xi_0, \Xi_{\tilde C}$ as in~\eqref{eq210705}, and $f(x_1, x_1^{-\frac{1}{p-1}}) = x_1^q, \quad x_1>0$.
%\begin{equation*}
%\begin{split}
%\Xi_0 = \{(x_1,x_2) \in \mathbb{R}_+^2\mid x_2>x_1^{-\frac{1}{p-1}}\},&\qquad 
%\Xi_{\tilde{C}} = \{(x_1,x_2) \in \mathbb{R}_+^2\mid x_2>\tilde C x_1^{-\frac{1}{p-1}}\},\\ 
%f(x_1, x_1^{-\frac{1}{p-1}}) &= x_1^q, \quad x_1>0.  %\hat\Xi_1 = \{x \in \mathbb{R}_+^2\mid x_2>{C}x_1^{-\frac{1}{p-1}}\}.
%\end{split}
%\end{equation*}
We consider the corresponding function~$\Bell$ defined by formula~\eqref{BellmanFunction} on $\Omega = \cl \Xi_0\setminus\Xi_{\tilde{C}}$. Then,
$$
R^q(\tilde C,q) = \max\{\Bell(1,x_2)\mid 1\leq x_2\leq \tilde C\}.
$$

We fix any $\theta>0$ and apply Lemma~\ref{Transference} with
$$
\hat\Xi = \Xi_C = \{(x_1,x_2) \in \mathbb{R}_+^2\mid x_2>{C}x_1^{-\frac{1}{p-1}}\}
$$
and find a function $w = (w_1, w_1^{-\frac{1}{p-1}}) \in \ClassC(\partial\Xi_0, \hat\Om(\Xi_0,\Xi_C))$ such that 
$$
\av{w_1}{\mathbb{T}}=1, \qquad \av{w_1^q}{\mathbb{T}} + \theta \geq R^q(\tilde C,q).
$$
Now,~\eqref{eq210703} and~\eqref{eq210704} imply that $[w_1]_{A_p(\mathbb{T})} \leq C$.

%
%\begin{equation*}
%\Om^{p,C}_1 = \Big\{\mu \in \Mpr(\mathbb{R_+})\mid \mathrm{P}_p\mu \in \hat{\Om}_1\Big\},
%\end{equation*}
%where $\hat{\Om}_1$ is constructed (see~\eqref{Star}) by $\Omega_0 = \{x \in \mathbb{R}_+^2\mid x_2>x_1^{-\frac{1}{p-1}}\}$, $\hat\Omega_1 = \{x \in \mathbb{R}_+^2\mid x_2>Cx_1^{-\frac{1}{p-1}}\}$, and the bijection $\mathrm{P}_p\colon \Mpr(\mathbb{R_+})\to \Mpr(\partial \Omega_0)$ is generated by the map $x_1\mapsto(x_1,x_1^{\frac{1}{p-1}})$.

%\begin{equation*}
%\Om^{p,C} = \Big\{\mu \in \Mpr(\{(x_1,x_1^{-\frac{1}{p-1}})\mid x_1 \in \mathbb{R}_+\})\;\Big| \int\limits x\,d\mu(x) \in \{x\in\mathbb{R}^2\mid 1\leq x_1x_2^{p-1} < C\}\,\Big\}
%\end{equation*}
%It remains to apply Lemma~\ref{Transference} 

\end{proof}
\begin{Rem}
One can use the same machinery to transfer weak Reverse H\"older inequalities. The corresponding sharp constants were computed in~\textup{\cite{Reznikov}}.
\end{Rem}

\section{Proof of Theorem~\ref{CoincedenceBMO}}\label{S4}
It is proved in~\cite{Slavin} and~\cite{SlavinVasyunin} that the optimal constant~$C_{3,p}^{I}$ defined by~\eqref{C3pI}, is obtained at the function
\begin{equation*}
\vf(x) = \log x, \quad x\in [0,1].
\end{equation*}
So, to prove Theorem~\ref{CoincedenceBMO}, it suffices to find a simple~$(\Om,\Delta)$-martingale (with~$\Om =\Om^{p,\eps} $ as in~\eqref{BMOpClass} with~$\eps = \|\log x\|_{p,[0,1]}$) such that~$M_0 = \mu_{\log x}$ (here and in what follows $\mu$ stands for the Lebesgue measure). Seemingly, such a martingale does not exist. We will construct simple martingales for which this relation is almost fulfilled, and then pass to the limit.

For real $\lambda>1$ and natural $k$ let
$
I_k = [\lambda^{-k},\lambda^{-k+1}], \tilde{I}_k = [0,\lambda^{-k}].
$

Let~$N$ be natural. Consider the splitting of~$[0,1]$ into the intervals~$I_k$, $k=1,2,\ldots, N$, and $\tilde{I}_N$. %where
%\begin{equation*}
%I_k = [\lambda^{-k},\lambda^{-k+1}],\quad k = 1,2,\ldots, N, \quad \tilde{I}_N = [0,\lambda^{-N}].
%\end{equation*}
Define the function~$\vf_{\lambda,N}$ by the rule
\begin{equation}\label{Approximation}
\vf_{\lambda,N}(x) = \begin{cases}
\frac{1}{|I_{k}|}\int\limits_{I_k} \log x\,dx,\quad x\in I_k,\quad k=1,2,\ldots, N;\\
-N\log \lambda,\quad x\in \tilde{I}_N.
\end{cases}
\end{equation}
Let~$S_n$ be the algebra on $[0,1]$ generated by the intervals~$I_1,I_2,\ldots,I_n$, here~$n=1,2,\ldots,N$. %Denote the interval~$[0,\lambda^{-k}]$ by~$\tilde{I}_k$. 
Consider~$M$ given by the formula
\begin{equation*}
M_n  =\sum\limits_{k \leq n}\delta_{\vf_{\lambda,N}|_{I_k}}\chi_{I_k} +  \mu_{\vf_{\lambda,N}|_{\tilde{I}_n}}\chi_{\tilde{I}_n}, \quad n = 0,1,\ldots, N,
\end{equation*}
and~$M_n = M_N$ for~$n \geq N$. Then~$(M_n,S_n)$ is a simple martingale that starts from~$M_0 = \mu_{\vf_{\lambda,N}}$, which is a convex combination of~$N$ delta measures. On each step, the martingale cuts one delta measure off this convex combination.
\begin{Le}\label{ConstructionOfMartingale}
For any~$\delta > 0$\textup, there exists~$\lambda_0>1$ such that for any~$\lambda \in (1, \lambda_0)$ and any~$N$\textup, the martingale~$M$ is an~$(\Om^{p,\eps},\Delta(\mathbb{R}))$-martingale\textup, where~$\Om^{p,\eps}$ is given by~\eqref{BMOpClass} with~$\eps = \|\log x\|_{p,[0,1]} + \delta$.
\end{Le}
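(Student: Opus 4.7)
The plan is to verify the two conditions of Definition~\ref{Mart} for the simple $(\Om^{p,\eps},\Delta(\mathbb{R}))$-martingale property. Condition~(2) is immediate: $\vf_{\lambda,N}|_{\tilde I_N}$ is constant, so $M_N$ takes a delta-measure value on every atom of $S_N$. For condition~(1), the atoms $I_k \in S_n$ with $k\leq n$ are not refined in $S_{n+1}$, so the only non-trivial check concerns the atom $\tilde I_n$, whose children in $S_{n+1}$ are $I_{n+1}$ and $\tilde I_{n+1}$. Setting
$$\nu_s := (1-s)\delta_{a_{n+1}} + s\,\mu_{\vf_{\lambda,N}|_{\tilde I_{n+1}}}, \qquad V_p(\nu) := \int_{\mathbb{R}} |t-\bar t(\nu)|^p\,d\nu(t),$$
the requirement reduces to $V_p(\nu_s) < \eps^p$ for every $s\in[0,1]$ and every $n\in\{0,\ldots,N-1\}$.

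Using the self-similar identity $\vf_{\lambda,N}(z\lambda^{-(n+1)}) = \vf_{\lambda,M}(z) - (n+1)\log\lambda$ with $M:=N-n-1$ together with the translation-invariance of $V_p$, the value $V_p(\nu_s)$ depends only on the three parameters $(\lambda, M, s)$. It therefore suffices to prove
$$G(\lambda) := \sup_{M\geq 0,\ s\in[0,1]} V_p(\nu_s^{(\lambda,M)}) \longrightarrow \|\log x\|_{p,[0,1]}^p \qquad\text{as } \lambda\to 1^+;$$
any $\lambda_0$ close enough to $1$ then yields $G(\lambda)<(\|\log x\|_{p,[0,1]}+\delta)^p=\eps^p$ on $(1,\lambda_0)$.

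To identify the limit, send $\lambda\to 1^+$ with $M\log\lambda\to\infty$. Then $\mu_{\vf_{\lambda,M}}$ converges to $\mu_{\log x|_{[0,1]}}$ (supported on $(-\infty,0]$ with density $e^t$), and $\nu_s$, after the shift by $(n+1)\log\lambda$, converges to $(1-s)\delta_0 + s\,\mu_{\log x|_{[0,1]}}$. The $p$-th central moment of this limiting measure is
$$g(s) = (1-s)s^p + s\int_0^\infty |s-u|^p\,e^{-u}\,du.$$
Integration by parts yields $h(s)+h'(s)=s^p$ for $h(s):=\int_0^\infty|s-u|^p e^{-u}\,du$, and combined with the factorization $p-(p+1)s+s^2=(s-1)(s-p)$ this gives
$$g'(s) = s^{p-1}(s-1)(s-p) + (1-s)h(s),$$
which is strictly positive on $(0,1)$ for every $p\geq 1$. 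Hence $g$ is increasing on $[0,1]$ with $\sup_s g(s) = g(1) = h(1) = \|\log x\|_{p,[0,1]}^p$, as needed in the limit.

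The main obstacle is upgrading this limiting identification to a bound uniform in $M$. For $M$ large one uses the $L_p$-approximation $\|\vf_{\lambda,M}-\log x\|_{L_p([0,1])}^p \leq (\log\lambda)^p + \lambda^{-M}\Gamma(p+1)$ in combination with the continuity estimate $|V_p(\mu_f)^{1/p}-V_p(\mu_g)^{1/p}|\leq 2\|f-g\|_{L_p}$. For intermediate $M$ where $T:=M\log\lambda$ is bounded, $\mu_{\vf_{\lambda,M}}$ is close to the truncation $\mu_{\max(\log x,-T)}$, whose $p$-th central moment does not exceed $h(1)$ (verified for $p=2$ by an explicit variance-truncation calculation, and extended to general $p$ via a monotonicity-in-$T$ argument). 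For $T$ close to $0$ the trivial range bound $V_p\leq (M\log\lambda)^p$ handles the remainder. Combining the three regimes delivers $G(\lambda)\leq \|\log x\|_{p,[0,1]}^p + o(1)$ as $\lambda\to 1^+$.
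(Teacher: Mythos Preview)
Your reduction to checking $V_p(\nu_s)<\eps^p$ on the segment between the children of $\tilde I_n$ is correct, and the limit computation with the derivative identity $g'(s)=s^{p-1}(s-1)(s-p)+(1-s)h(s)$ is elegant. However, the paper's proof takes a much shorter and conceptually different route that avoids your uniformity analysis entirely. It introduces the auxiliary function
\[
\psi_{\lambda,n}(x)=\begin{cases}\vf_{\lambda,N}(x),&0<x\le\lambda^{-n},\\ a_n,&x>\lambda^{-n},\end{cases}
\]
on $[0,\infty)$, observes that every point of the segment $[\delta_{a_n},\mu_{\vf_{\lambda,N}|_{\tilde I_n}}]$ is realized as $\mu_{\psi_{\lambda,n}|_{[0,s]}}$ for some $s\in[\lambda^{-n},\infty)$, and then bounds $\|\psi_{\lambda,n}\|_{p,[0,\infty)}\le\|\log x\|_{p,[0,1]}+O(\log\lambda)$ uniformly in $n,N$ by comparing $\psi_{\lambda,n}$ with a double truncation of $\log x$ and invoking the truncation lemma from Appendix~\ref{AppA}. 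This gives membership in $\Om^{p,\eps}$ in one stroke.

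Your ``three regimes'' argument, by contrast, has genuine gaps. In the intermediate regime ($T=M\log\lambda$ bounded) you only discuss $V_p(\mu_{\vf_{\lambda,M}})$, i.e.\ the case $s=1$, and never explain how to control $V_p(\nu_s)$ for all $s\in[0,1]$; the monotonicity $g'(s)>0$ was established only in the limit $T\to\infty$. The phrase ``extended to general $p$ via a monotonicity-in-$T$ argument'' is not a proof: monotonicity of the $p$-th central moment of $\max(\log x,-T)$ in $T$ is not obvious for $p\ne2$, and in any case you need it for the convex combinations $\nu_s$, whose mean shifts with $s$. The natural way to close these gaps is to realize $(1-s)\delta_0+s\,\mu_{\max(\log x,-T)}$ as the distribution of $\min(\max(\log(x/s),-T),0)$ on $[0,1]$ and apply the truncation lemma---but that is precisely the paper's argument.
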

\begin{proof}
By Definition~\ref{Mart}, we need to show that the segments~$[\mu_{\vf_{\lambda,N}|_{\tilde{I}_n}},\delta_{\vf_{\lambda,N}|_{I_n}}]$ lie in the domain~$\Om^{p,\eps}$. % with~$\eps = \|\log x\|_{\BMO_p([0,1])} + \delta$. 
For that we consider yet another function on $\mathbb{R}_+$:
\begin{equation*}
\psi_{\lambda,n}(x) = \begin{cases}
\vf_{\lambda,N}(x), \quad 0 < x \leq \lambda^{-n};\\
\frac{1}{|I_{n}|}\int\limits_{I_n} \log x\,dx, \quad x > \lambda^{-n}.
\end{cases}
\end{equation*}
This function satisfies the bound
\begin{equation*}
\|\psi_{\lambda,n}\|_{p,[0,\infty)} \leq \|\log x\|_{p,[0,\infty)} + |\log \lambda\,| = \|\log x\|_{p,[0,1]} + |\log \lambda\,|,%. \cre{\text{U MENYA VOPROS}}
\end{equation*}
here we have used the fact that truncation does not increase the $\BMO_p$ norm (see~Appendix~\ref{AppA}).

So, take~$\lambda_0 = e^{-\frac15 \delta}$. Then, for any $\lambda \in (1,\lambda_0)$ we have
\begin{equation*}
\|\psi_{\lambda,n}\|_{p,[0,\infty)}<\|\log x\|_{p,[0,1]}+\delta.
\end{equation*}
Any point inside the interval~$(\mu_{\vf_{\lambda,N}|_{\tilde{I}_n}},\delta_{\vf_{\lambda,N}|_{I_n}})$ may be realized as~$\mu_{\psi_{\lambda,n}|_{[0,s]}}$ for some~$s\in [\lambda^{-n},\infty)$, therefore it lies in $\Om^{p,\eps}$.
\end{proof}
\begin{proof}[Proof of Theorem~\ref{CoincedenceBMO}.] By the results of~\cite{Slavin} and~\cite{SlavinVasyunin}, 
\begin{equation*}
\int\limits_0^1 e^{C_{3,p}^I \frac{\log x}{\|\log x\|_{p,[0,1]}}} = \infty.
\end{equation*} 
Since~$C_{3,p}^{\mathbb{R}} \leq C_{3,p}^I$, it suffices, given any~$m \in \mathbb{N}$, to construct a function~$\vf_m$ on the line such that
\begin{equation*}
\int\limits_0^1 e^{C_{3,p}^I \frac{\vf_m}{\|\vf_m\|_{p,\mathbb{R}}}} > m.
\end{equation*}
The function~$\vf_m$ will be periodic. We consider the functions~$\vf_{\lambda,N}$ constructed above and notice that there exists~$\lambda$ and~$N$ such that
\begin{equation*}
\int\limits_0^1 e^{C_{3,p}^I \frac{\vf_{\lambda,N}}{\|\log x\|_{p,[0,1]}}} > m.
\end{equation*}
So, it suffices to construct a function on the circle that has the same distribution as~$\vf_{\lambda,N}$ and whose~$\BMO_p(\mathbb{T})$-norm is arbitrarily close to~$\|\log x\|_{\BMO_p([0,1])}$. Such a function is provided by Lemma~\ref{ConstructionOfMartingale} and Theorem~\ref{Main}.
\end{proof}

%\section{A case where the transference principle does not hold}

\appendix%\appendixpage
\section{Truncations in $\BMO_p$}\label{AppA}%\label{SPTT}
The facts surveyed in this section seem to be a part of folklore. The authors did not manage to find the proofs in the literature and provide them for completeness.
\begin{Le}
For any~$\varphi\colon I \to \mathbb{R}$ and any non-decreasing function~$g\colon \mathbb{R}\to\mathbb{R}$\textup,
\begin{equation*}
\|g(\varphi)\|_{p,I} \leq \|g\|_{\mathrm{Lip}(\mathbb{R})}\|\varphi\|_{p,I}.
\end{equation*}
\end{Le}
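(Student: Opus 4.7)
The plan is to reduce the lemma to a one-subinterval distributional inequality and prove it via an FKG-type decomposition of the identity. Since
\begin{equation*}
\|g(\vf)\|_{p,I}^p = \sup_{J \subset I} \frac{1}{|J|}\int_J\big|g(\vf) - \av{g(\vf)}{J}\big|^p\, dx,
\end{equation*}
it suffices to fix a subinterval $J \subset I$ and prove
\begin{equation*}
\frac{1}{|J|}\int_J\big|g(\vf) - \av{g(\vf)}{J}\big|^p\, dx \leq L^p \cdot \frac{1}{|J|}\int_J|\vf - \av{\vf}{J}|^p\, dx,
\end{equation*}
where $L := \|g\|_{\mathrm{Lip}(\mathbb{R})}$. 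Both sides depend on $\vf|_J$ only through its distribution $\mu := \mu_{\vf|_J}$, and rescaling $g \mapsto g/L$ reduces the problem to $L = 1$. So the task is to prove the distributional inequality
\begin{equation*}
\int_\mathbb{R}\Big|g(t) - \int g\, d\mu\Big|^p\, d\mu(t) \leq \int_\mathbb{R}\Big|t - \int s\, d\mu(s)\Big|^p\, d\mu(t)
\end{equation*}
for every Borel probability measure $\mu$ on $\mathbb{R}$ and every non-decreasing $1$-Lipschitz function~$g$.

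The key observation is the decomposition $\mathrm{id} = g + h$ with $h(t) := t - g(t)$. As $g$ is $1$-Lipschitz and non-decreasing, $h'(t) = 1 - g'(t)$ lies in $[0,1]$ almost everywhere, so $h$ is non-decreasing as well. Centering at the $\mu$-mean, define
\begin{equation*}
A(t) := g(t) - \int g\,d\mu, \qquad B(t) := h(t) - \int h\,d\mu.
\end{equation*}
Then $A, B$ are both non-decreasing in $t$, both have $\mu$-mean zero, and $A(t) + B(t) = t - \int s\,d\mu(s)$. The desired inequality reduces to $\int|A|^p\,d\mu \leq \int|A+B|^p\,d\mu$.

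To prove this, I will use the homotopy $\phi(s) := \int|A + sB|^p\, d\mu$, $s \in [0,1]$, and show $\phi'(s) \geq 0$. Differentiating under the integral yields
\begin{equation*}
\phi'(s) = p\int\Phi_p(A + sB)\cdot B\,d\mu, \qquad \Phi_p(x) := \sign(x)\,|x|^{p-1},
\end{equation*}
and $\Phi_p$ is non-decreasing on $\mathbb{R}$ for $p \geq 1$. Hence $\Phi_p(A + sB)$ and $B$ are both non-decreasing functions of $t$, and Chebyshev's sum inequality (the scalar case of FKG) gives
\begin{equation*}
\int\Phi_p(A + sB)\cdot B\, d\mu \geq \Big(\int\Phi_p(A + sB)\,d\mu\Big)\Big(\int B\, d\mu\Big) = 0.
\end{equation*}
So $\phi$ is non-decreasing on $[0,1]$, giving $\phi(0) \leq \phi(1)$, which is exactly what we need.

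The main technical subtlety is the justification of differentiation under the integral for $\phi$. This is routine for $p > 1$, where $x\mapsto |x|^p$ is $C^1$, but $p = 1$ is delicate because $|x|$ has a corner at $0$. I would handle this by smoothing $|x|$ to $\sqrt{x^2 + \eps^2}$ (the monotonicity of $x\mapsto x/\sqrt{x^2 + \eps^2}$ preserves the Chebyshev step) and then letting $\eps \to 0$, or alternatively by invoking convexity of $\phi$ together with a direct verification that its right derivative is non-negative at each~$s$.
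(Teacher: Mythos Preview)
Your proof is correct and genuinely different from the paper's. Both arguments reduce to the single-interval oscillation bound
\begin{equation*}
\int_J\big|g(\vf) - \av{g(\vf)}{J}\big|^p \leq \int_J|\vf - \av{\vf}{J}|^p,
\end{equation*}
and both implicitly use the decomposition $t = g(t) + h(t)$ with $h$ non-decreasing. After that they diverge. The paper first replaces $\vf$ by its non-decreasing rearrangement (invoking the theorem of Klemes/Shanin that this does not increase the $\BMO_p$-norm), then shows that $\psi = g(\vf)+c$ is majorized by $\vf$ in the Hardy--Littlewood--P\'olya sense (the function $t\mapsto\int_0^t(\vf-\psi)$ is convex and vanishes at the endpoints, hence is nonpositive), and finishes with Karamata's inequality applied to the convex function $|\cdot - \av{\vf}{}|^p$.

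Your route instead works directly with the distribution $\mu = \mu_{\vf|_J}$, avoiding the rearrangement altogether, and replaces the majorization/Karamata step by the homotopy $\phi(s)=\int|A+sB|^p\,d\mu$ together with Chebyshev's correlation inequality. This is more self-contained: you do not need to cite the rearrangement result, and the Chebyshev step is a two-line computation, whereas Karamata is a somewhat heavier tool. The price is the differentiation-under-the-integral bookkeeping and the small extra care at $p=1$, which you handle correctly with the $\sqrt{x^2+\eps^2}$ smoothing. Conceptually the two proofs are close cousins (Karamata itself can be proved by a similar interpolation argument), but your version is arguably the more elementary of the two.
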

By the Lipschitz constant of a function we mean
\begin{equation*}
\|g\|_{\mathrm{Lip}} = \sup\limits_{x,y\in\mathbb{R}}\frac{|g(x)-g(y)|}{|x-y|}.
\end{equation*} 
\begin{proof}
Without loss of generality,~$I = [0,1]$, and~$g$ is one-Lipschitz. It suffices to prove the estimate
\begin{equation}\label{eq290701}
\int\limits_0^1 |g(\varphi(t)) - \av{g(\varphi)}{[0,1]}|^p\,dt \leq \|\varphi\|_{p,I}^p.
\end{equation}
Since the monotonic rearrangement does not increase the $\BMO_p$-norm (see~\cite{Klemes} and \cite{Shanin2}), one may assume~$\vf$ to be non-decreasing. Consider the function~$\psi = g(\vf)+c$ such that $\int_0^1 \psi = \int_0^1 \vf$ and $c$ is a constant. The function 
$$
I(t) = \int\limits_0^t \vf - \int\limits_0^t \psi, \quad t \in [0,1],
$$
is convex because $I'=\vf-\psi$ does not decrease. Moreover, $I(0)=I(1)=0$. Therefore, $I \leq 0$ on $[0,1]$. By Karamata's inequality, 
$$
\int\limits_0^1 |\vf - \av{\vf}{[0,1]}|^p \geq \int\limits_0^1 |\psi - \av{\psi}{[0,1]}|^p,
$$ 
which implies~\eqref{eq290701}.
\end{proof}

\begin{Cor}
For any real~$N,$ the truncated function~$\min(\varphi,N)$ has the same or smaller~$\BMO_p$ norm than~$\varphi$.
\end{Cor}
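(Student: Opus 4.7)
The plan is to derive this directly from the preceding Lemma. I would define $g \colon \mathbb{R} \to \mathbb{R}$ by $g(t) = \min(t, N)$ and verify the two hypotheses needed to invoke that Lemma: monotonicity and Lipschitz continuity with constant one.

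First I would note that $g$ is non-decreasing, since if $s \leq t$ then both $\min(s,N) \leq \min(t,N)$ follow by splitting into cases depending on whether $t \leq N$ or $t > N$. Next I would verify that $\|g\|_{\mathrm{Lip}(\mathbb{R})} \leq 1$, which is the standard fact that the minimum of two 1-Lipschitz functions (here, the identity and the constant $N$) is again 1-Lipschitz; equivalently, $|\min(s,N) - \min(t,N)| \leq |s-t|$ is checked by cases.

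Having established that $g$ satisfies the hypotheses of the Lemma, applying it gives
\begin{equation*}
\|\min(\varphi, N)\|_{p,I} = \|g(\varphi)\|_{p,I} \leq \|g\|_{\mathrm{Lip}(\mathbb{R})} \|\varphi\|_{p,I} \leq \|\varphi\|_{p,I},
\end{equation*}
which is the desired conclusion. There is no real obstacle here; the entire content of the Corollary is packaged in the Lemma, and the proof amounts to recognizing that truncation from above is a monotone 1-Lipschitz operation.
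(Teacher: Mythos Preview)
Your proof is correct and is exactly the intended derivation: the paper states this as a direct corollary of the preceding Lemma, and applying that Lemma with $g(t)=\min(t,N)$ (a non-decreasing $1$-Lipschitz function) is precisely what is meant.
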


D. Stolyarov

\medskip

Department of Mathematics and Computer Sciences, St. Petersburg State University, Russia

St. Petersburg Department of Steklov Mathematical Institute, Russia

\medskip

d.m.stolyarov@spbu.ru

\bigskip

P. Zatitskiy

\medskip

Department of Mathematics and Computer Sciences, St. Petersburg State University, Russia

St. Petersburg Department of Steklov Mathematical Institute, Russia

\medskip

pavelz@pdmi.ras.ru

\end{document}